\newtheorem{theorem}{Theorem}
\theoremstyle{plain}
\newtheorem{corollary}{Corollary}
\numberwithin{equation}{section}
\begin{document}
\title[$x^{2}-\left( a^{2}b^{2}-b\right) y^{2}=N$ and $x^{2}-\left(
a^{2}b^{2}-2b\right) y^{2}=N$ ]{The generalized Fibonacci and Lucas
solutions of the Pell equations $x^{2}-\left( a^{2}b^{2}-b\right) y^{2}=N$
and $x^{2}-\left( a^{2}b^{2}-2b\right) y^{2}=N$ }
\author{Bilge PEKER}
\address{Department of Mathematics Education, Ahmet Kelesoglu Education
Faculty, Necmettin Erbakan University, Konya, Turkey.}
\email{bilge.peker@yahoo.com}
\author{Hasan SENAY}
\address{Education Faculty, Mevlana University, Konya, Turkey}
\email{hsenay@mevlana.edu.tr}
\date{March 4, 2013}
\subjclass[2000]{ 11D09, 11D79, 11D45, 11A55, 11B39, 11B50, 11B99 }
\keywords{Diophantine equations, Pell equations, continued fraction, integer
solutions, generalized Fibonacci and Lucas sequences }

\begin{abstract}
In this study, we find continued fraction expansion of $\sqrt{d}$ when $%
d=a^{2}b^{2}-b$ and $d=a^{2}b^{2}-2b$ where $a$ and $b$ are positive
integers. We consider the integer solutions of the Pell equations $%
x^{2}-\left( a^{2}b^{2}-b\right) y^{2}=N$ and $x^{2}-\left(
a^{2}b^{2}-2b\right) y^{2}=N$ when $N\in \left\{ \pm 1,\pm 4\right\} $. We
formulate the $n$-th solution $\left( x_{n},y_{n}\right) $ by using the
continued fraction expansion. We also formulate the $n$-th solution $\left(
x_{n},y_{n}\right) $ in terms of generalized Fibonacci and Lucas sequences.
\end{abstract}

\maketitle

\bigskip \textbf{1. Introduction and Preliminaries}

\bigskip The equation $x^{2}-dy^{2}=N,$ with given integers $d$ and $N,$
unknowns $x$ and $y$, is called as Pell equation. In the literature, there
are several methods for finding the integer solutions of Pell equation such
as the Lagrange-Matthews-Mollin algorithm, the cyclic method, Lagrange's
system of reductions, use of binary quadratic forms, etc.

If $d$ is negative, the equation can have only a finite number of solutions.
If $d$ is a perfect square, i.e. $d=a^{2},$ the equation reduces to $\left(
x-ay\right) \left( x+ay\right) =N$ and there is only a finite number of
solutions. If $d$ is a positive integer but not a perfect square, then
simple continued fractions are very useful. The simple continued fraction
expansion of $\sqrt{d}\ $has the form $\sqrt{d}=\left[ a_{0},\overline{%
a_{1},a_{2},a_{3},...,a_{m-1},2a_{0}}\right] $ with $a_{0}=\left[ \sqrt{d}%
\right] $. If the fundamental solution of $x^{2}-dy^{2}=1$ is $x=x_{1}$ and $%
y=y_{1}$, then all nontrivial solutions are given by $x=x_{n}$ and $y=y_{n}$%
, where $x_{n}+y_{n}\sqrt{d}=\left( x_{1}+y_{1}\sqrt{d}\right) ^{n}$. If a
single solution $\left( x,y\right) =\left( g,h\right) $ of the equation $%
x^{2}-dy^{2}=N$ is known, other solutions can be found. Let $\left(
r,s\right) $ be a solution of the unit form $x^{2}-dy^{2}=1$. Then $\left(
x,y\right) =\left( gr\pm dhs,gs\pm hr\right) $ are solutions of the equation 
$x^{2}-dy^{2}=N$.

Given a continued fraction expansion of $\sqrt{d}$, where all the $a_{i}$'s
are real and all except possibly $a_{0}$ are positive, define sequences $%
\left\{ p_{n}\right\} $ and $\left\{ q_{n}\right\} $ by $p_{-2}=0$, $%
p_{-1}=1 $, $p_{k}=a_{k}p_{k-1}+p_{k-2}$ and $q_{-2}=1$, $q_{-1}=0$, $%
q_{k}=a_{k}q_{k-1}+q_{k-2}$ for $k\geq 0$. Let $m$ be the length of the
period of continued fraction. Then the fundamental solution of $%
x^{2}-dy^{2}=1$ is%
\begin{equation*}
\left( x_{1},y_{1}\right) =\left\{ 
\begin{array}{c}
\left( p_{m-1},q_{m-1}\right) \\ 
\left( p_{2m-1},q_{2m-1}\right)%
\end{array}%
\right. 
\begin{array}{c}
\text{if }m\text{ is even} \\ 
\text{if }m\text{ is odd.}%
\end{array}%
\end{equation*}

\bigskip\ If the length of the period of continued fraction is even, then
the equation $x^{2}-dy^{2}=-1$ has no integer solutions. If $m$ is odd, the
fundamental solution of $x^{2}-dy^{2}=-1$ is given by $\left(
x_{1},y_{1}\right) =\left( p_{m-1},q_{m-1}\right) $ $\left[ 3\right] $.

Now let us give the following known theorems $\left[ 4\right] $ that will be
needed for the next section.

\begin{theorem}
Let $d\equiv 2\left( \func{mod}4\right) $ or $d\equiv 3\left( \func{mod}%
4\right) $. Then the equation $x^{2}-dy^{2}=-4$ has positive integer
solutions if and only if the equation $x^{2}-dy^{2}=-1$ has positive integer
solutions.
\end{theorem}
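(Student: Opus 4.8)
The plan is to prove the two implications separately, and to point out that only one of them actually consumes the hypothesis on $d$.

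For the forward direction, I would suppose that $x^{2}-dy^{2}=-1$ admits a positive integer solution $(x_{0},y_{0})$ and simply scale. Multiplying the identity $x_{0}^{2}-dy_{0}^{2}=-1$ through by $4$ gives $(2x_{0})^{2}-d(2y_{0})^{2}=-4$, so $(2x_{0},2y_{0})$ is a positive integer solution of $x^{2}-dy^{2}=-4$. This step is immediate and uses nothing about the residue class of $d$; it is valid for every nonsquare $d$.

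For the converse, which is the substantive direction, I would suppose $(x_{0},y_{0})$ is a positive integer solution of $x^{2}-dy^{2}=-4$ and show that the hypothesis $d\equiv 2$ or $d\equiv 3\pmod 4$ forces both $x_{0}$ and $y_{0}$ to be even, after which dividing by $2$ recovers a solution of the $-1$ equation. Reducing $x_{0}^{2}-dy_{0}^{2}=-4$ modulo $4$ yields $x_{0}^{2}\equiv dy_{0}^{2}\pmod 4$, and since the only quadratic residues modulo $4$ are $0$ and $1$, I would run through the four parity combinations of $(x_{0},y_{0})$. When $d\equiv 2\pmod 4$ the congruence $x_{0}^{2}\equiv 2y_{0}^{2}\pmod 4$ is satisfiable only when $x_{0}\equiv y_{0}\equiv 0\pmod 2$, since an odd $y_{0}$ forces $x_{0}^{2}\equiv 2$ and an odd $x_{0}$ forces $x_{0}^{2}\equiv 1\not\equiv 0$. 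When $d\equiv 3\pmod 4$ the congruence becomes $x_{0}^{2}+y_{0}^{2}\equiv 0\pmod 4$, which again rules out every parity pattern except both even. Writing $x_{0}=2u$ and $y_{0}=2v$ and cancelling the common factor $4$ then gives $u^{2}-dv^{2}=-1$ with $u,v$ positive.

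I do not expect a genuine obstacle here: the entire content of the argument is the finite parity case check above, and the role of the hypothesis is precisely to make the trichotomy clean. The point I would stress in the write-up is why the excluded residue classes are excluded: for $d\equiv 1\pmod 4$ the $-4$ equation can have solutions with $x_{0}$ and $y_{0}$ both odd (for instance $x^{2}-5y^{2}=-4$ has the solution $(1,1)$), so halving no longer produces integers and the naive division fails; the passage from $-4$ to $-1$ in that case is genuinely subtler and is governed by the generalized Fibonacci and Lucas relations developed later in the paper. Thus the hypothesis $d\equiv 2,3\pmod 4$ is exactly what reduces the statement to the elementary parity argument, and no deeper machinery is needed under it.
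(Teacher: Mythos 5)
Your proof is correct, and there is in fact nothing in the paper to compare it against: this statement is Theorem~1 of the preliminaries, which the paper quotes \emph{without proof} as a known result from reference~[4] (Robertson), and later invokes only in contrapositive form, in the proof that $x^{2}-\left(a^{2}b^{2}-2b\right)y^{2}=-4$ has no positive integer solutions. What you have supplied is the standard elementary argument, and it is complete: the passage from $-1$ to $-4$ is the scaling $(x_{0},y_{0})\mapsto(2x_{0},2y_{0})$, which, as you note, needs no hypothesis on $d$; the substantive passage from $-4$ to $-1$ follows from reducing $x_{0}^{2}-dy_{0}^{2}=-4$ modulo $4$, where the fact that squares are $0$ or $1 \pmod{4}$ makes the congruence $x_{0}^{2}\equiv 2y_{0}^{2}\pmod{4}$ (case $d\equiv 2$) and the congruence $x_{0}^{2}+y_{0}^{2}\equiv 0\pmod{4}$ (case $d\equiv 3$) each force $x_{0}$ and $y_{0}$ to be even, after which halving yields a positive solution of the $-1$ equation. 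Your example $(1,1)$ for $x^{2}-5y^{2}=-4$ correctly isolates why $d\equiv 1\pmod{4}$ must be excluded from this halving argument. The only inaccuracy is a side remark, not a gap: the $d\equiv 1\pmod{4}$ case is not ``governed by the generalized Fibonacci and Lucas relations developed later in the paper,'' since those formulas merely parametrize the solutions of the particular equations $x^{2}-\left(a^{2}b^{2}-b\right)y^{2}=N$ and $x^{2}-\left(a^{2}b^{2}-2b\right)y^{2}=N$, and the paper never treats the passage from $-4$ to $-1$ for $d\equiv 1\pmod{4}$; that case is classical (one cubes the half-integral unit $\left(x_{0}+y_{0}\sqrt{d}\right)/2$) but lies entirely outside this paper.
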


\begin{theorem}
Let $d\equiv 0\left( \func{mod}4\right) $. If fundamental solution of the
equation $x^{2}-\left( d/4\right) y^{2}=1$ is $x_{1}+y_{1}\sqrt{d/4}$, then
fundamental solution of the equation $x^{2}-dy^{2}=4$ is $\left(
2x_{1},y_{1}\right) $.
\end{theorem}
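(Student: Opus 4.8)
The plan is to set up a bijection between the positive solutions of the two equations and to show that this bijection is order preserving, so that fundamental solutions correspond to fundamental solutions. Since $d\equiv 0\pmod 4$, the quantity $d/4$ is a positive integer, so $x^{2}-(d/4)y^{2}=1$ is a genuine Pell equation with a well-defined fundamental solution $(x_{1},y_{1})$.

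First I would verify that $(2x_{1},y_{1})$ actually solves the target equation. Substituting directly,
\begin{equation*}
(2x_{1})^{2}-dy_{1}^{2}=4x_{1}^{2}-4\left( d/4\right) y_{1}^{2}=4\left( x_{1}^{2}-(d/4)y_{1}^{2}\right) =4,
\end{equation*}
using $x_{1}^{2}-(d/4)y_{1}^{2}=1$. So $(2x_{1},y_{1})$ is at least a positive solution of $x^{2}-dy^{2}=4$.

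Next I would establish the correspondence in both directions. Given any positive solution $(X,Y)$ of $x^{2}-dy^{2}=4$, write $X^{2}=dY^{2}+4=4\left( (d/4)Y^{2}+1\right)$; here the hypothesis $d\equiv 0\pmod 4$ is essential, since it forces $X^{2}$ to be divisible by $4$ and hence $X$ to be even, say $X=2u$. Dividing the relation $4u^{2}-dY^{2}=4$ by $4$ yields $u^{2}-(d/4)Y^{2}=1$, so $(u,Y)$ is a positive solution of the first equation. Conversely, every positive solution $(u,Y)$ of $x^{2}-(d/4)y^{2}=1$ produces the positive solution $(2u,Y)$ of $x^{2}-dy^{2}=4$. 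Thus $u\mapsto (2u,Y)$ is a bijection between the two solution sets, and since doubling is strictly increasing while $Y$ is unchanged, it preserves the natural ordering of solutions by size.

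The key step — and the only place where care is needed — is the minimality argument. Suppose $(2x_{1},y_{1})$ were not the fundamental solution of $x^{2}-dy^{2}=4$. Then there would exist a smaller positive solution $(X,Y)$; by the previous paragraph $X=2u$ is even with $u<x_{1}$, and $(u,Y)$ would be a positive solution of $x^{2}-(d/4)y^{2}=1$ strictly smaller than $(x_{1},y_{1})$, contradicting the fact that $(x_{1},y_{1})$ is fundamental. Hence $(2x_{1},y_{1})$ is the fundamental solution, as claimed. I expect the main obstacle to be stating the ordering of solutions precisely enough that ``smaller'' is unambiguous; this is handled cleanly by the observation that the correspondence leaves the $y$-coordinate fixed and merely halves the $x$-coordinate, so comparisons transfer verbatim between the two equations.
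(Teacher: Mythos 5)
Your proof is correct, but there is nothing in the paper to compare it against: the statement is Theorem 2 of the preliminaries, which the paper quotes without proof from reference $[4]$ (Robertson), so your argument supplies a justification the paper omits. Your route is the natural one and it is complete: the direct substitution shows $(2x_{1},y_{1})$ solves $x^{2}-dy^{2}=4$; the congruence $d\equiv 0\ (\mathrm{mod}\ 4)$ forces $X^{2}=dY^{2}+4\equiv 0\ (\mathrm{mod}\ 4)$, hence $X$ even, for any positive solution $(X,Y)$, which yields the inverse map and makes $(u,Y)\mapsto (2u,Y)$ a bijection between the two solution sets; and since the bijection fixes the $y$-coordinate and doubles the $x$-coordinate, it carries the least positive solution to the least positive solution. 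The only point deserving one more explicit sentence is the ordering itself: among positive solutions of a fixed equation $x^{2}-dy^{2}=N$ with $N>0$, the coordinates $x$ and $y$ increase together (since $x^{2}=dy^{2}+N$), so ``smallest positive solution'' is unambiguous whether measured by $x$, by $y$, or by $x+y\sqrt{d}$, and your transfer of minimality is then immediate.
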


\begin{theorem}
Let $d\equiv 1\left( \func{mod}4\right) $ or $d\equiv 2\left( \func{mod}%
4\right) $ or $d\equiv 3\left( \func{mod}4\right) $. If fundamental solution
of the equation $x^{2}-dy^{2}=1$ is $x_{1}+y_{1}\sqrt{d}$, then fundamental
solution of the equation $x^{2}-dy^{2}=4$ is $\left( 2x_{1},2y_{1}\right) $.
\end{theorem}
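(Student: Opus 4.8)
The plan is to establish that $(2x_1,2y_1)$ is the \emph{least} positive solution of $x^2-dy^2=4$ by a parity-and-descent argument. That it is a solution is immediate: from $x_1^2-dy_1^2=1$ one gets $(2x_1)^2-d(2y_1)^2=4(x_1^2-dy_1^2)=4$. For minimality I would take an arbitrary positive solution $(u,v)$ and aim to show that both $u$ and $v$ are even; granting this, write $u=2U$, $v=2V$ to obtain $U^2-dV^2=1$, so that $(U,V)$ is a positive solution of the unit equation. Minimality of the fundamental solution then forces $U\ge x_1$ and $V\ge y_1$, hence $u\ge 2x_1$ and $v\ge 2y_1$, which is exactly the assertion that $(2x_1,2y_1)$ is fundamental.

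The parity step is handled by reducing $u^2-dv^2=4$ modulo powers of $2$. A square is $\equiv 0$ or $1\pmod 4$, so when $d\equiv 2\pmod 4$ an odd $v$ would give $dv^2\equiv 2\pmod 4$ and then $u^2\equiv 4+2\equiv 2\pmod 4$, impossible; when $d\equiv 3\pmod 4$ an odd $v$ would give $u^2\equiv 4+3\equiv 3\pmod 4$, again impossible. In both cases $v$ is even, and then $u^2\equiv 0\pmod 4$ makes $u$ even, so the descent above applies verbatim and the theorem follows in these two congruence classes.

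For $d\equiv 1\pmod 4$ the test modulo $4$ only yields $u^2\equiv v^2\pmod 4$, i.e. that $u$ and $v$ share a parity, so I would sharpen it modulo $8$. If both were odd then $u^2\equiv v^2\equiv 1\pmod 8$ and the equation reads $1-d\equiv 4\pmod 8$, that is $d\equiv 5\pmod 8$. Consequently, when $d\equiv 1\pmod 8$ no odd solution can exist, the coordinates are again forced even, and the same descent completes the argument.

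The main obstacle is the residual subcase $d\equiv 5\pmod 8$, where $1-d\equiv 4\pmod 8$ holds and the $2$-adic obstruction to odd solutions simply vanishes. To prove the statement exactly as worded one must still exclude every positive solution $(u,v)$ with $u,v$ odd lying below $(2x_1,2y_1)$; congruences no longer suffice, and the argument has to engage the multiplicative structure of the solution set—equivalently, the norm-one units of the order $\mathbb{Z}[(1+\sqrt{d})/2]$—to decide whether a half-integer unit smaller than $x_1+y_1\sqrt{d}$ can occur. I expect essentially all of the real work to be concentrated here.
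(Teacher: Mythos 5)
The paper never proves this statement: it is quoted, without argument, as a known theorem from Robertson's paper (reference $\left[ 4\right]$), so there is no internal proof to compare yours against, and your attempt must be judged on its own merits. On those merits, everything you actually carry out is correct. The verification that $\left( 2x_{1},2y_{1}\right) $ solves $x^{2}-dy^{2}=4$, the descent $u=2U$, $v=2V$ reducing to the unit equation, and the minimality conclusion are all sound, as are the parity arguments: for $d\equiv 2\pmod{4}$ and $d\equiv 3\pmod{4}$ the mod $4$ analysis forces $u,v$ even, and for $d\equiv 1\pmod{4}$ your sharpening mod $8$ correctly shows odd solutions can exist only when $d\equiv 5\pmod{8}$. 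So your proof is complete for $d\equiv 2,3\pmod{4}$ and $d\equiv 1\pmod{8}$.

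The obstacle you isolated in the residual case $d\equiv 5\pmod{8}$ is not merely hard --- it is fatal, because there the theorem as worded is false. Take $d=5$: the fundamental solution of $x^{2}-5y^{2}=1$ is $\left( 9,4\right) $, yet $\left( 3,1\right) $ satisfies $3^{2}-5\cdot 1^{2}=4$, so the fundamental solution of $x^{2}-5y^{2}=4$ is $\left( 3,1\right) $, not $\left( 18,8\right) $; similarly for $d=21$ one has $\left( 5,1\right) $ against the predicted $\left( 110,24\right) $. Hence no engagement with the units of $\mathbb{Z}\left[ \left( 1+\sqrt{d}\right) /2\right] $ can close the gap; the statement requires the additional hypothesis that $x^{2}-dy^{2}=4$ admits no solution with $x,y$ both odd (automatic in the congruence classes you settled), which is presumably present in Robertson's original formulation and lost in transcription here. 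Your instinct that ``essentially all of the real work'' concentrates at $d\equiv 5\pmod{8}$ is thus vindicated in the strongest possible sense. It is worth noting that this caveat touches the paper itself: the values $d=a^{2}b^{2}-2b$ are never $\equiv 5\pmod{8}$ (odd values of this form are $\equiv 3$ or $7\pmod{8}$), but $d=a^{2}b^{2}-b$ can be, e.g. $a=2$, $b=7$ gives $d=189\equiv 5\pmod{8}$, so the appeal to this theorem in Theorem 7 silently requires checking that no odd solutions occur for that family.
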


In this study $\left[ 2\right] $, since generalized Fibonacci and Lucas
sequences related solutions of the forthcoming Pell equations are going to
be taken into consideration, let us briefly recall the generalized Fibonacci
sequences $\left\{ U_{n}\left( k,s\right) \right\} $ and Lucas sequences $%
\left\{ V_{n}\left( k,s\right) \right\} $. Let $k$ and $s$ be two non-zero
integers with $k^{2}+4s>0$. Generalized Fibonacci sequence is defined by%
\begin{equation*}
U_{0}\left( k,s\right) =0,U_{1}\left( k,s\right) =1
\end{equation*}%
and%
\begin{equation*}
U_{n+1}\left( k,s\right) =kU_{n}\left( k,s\right) +sU_{n-1}\left( k,s\right) 
\end{equation*}%
for $n\geq 1$. Generalized Lucas sequence is defined by%
\begin{equation*}
V_{0}\left( k,s\right) =2,V_{1}\left( k,s\right) =k
\end{equation*}%
and%
\begin{equation*}
V_{n+1}\left( k,s\right) =kV_{n}\left( k,s\right) +sV_{n-1}\left( k,s\right) 
\end{equation*}%
for $n\geq 1$. It is also well-known from the literature that generalized
Fibonacci and Lucas numbers have many interesting and significant
properties. Binet's formulas are probably the most important one among them.
For generalized Fibonacci and Lucas sequences, Binet's formulas are given by 
$U_{n}\left( k,s\right) =\frac{\alpha ^{n}-\beta ^{n}}{\alpha -\beta }$ and $%
V_{n}\left( k,s\right) =\alpha ^{n}+\beta ^{n}$ where $\alpha =\left( k+%
\sqrt{k^{2}+4s}\right) /2$ and $\beta =\left( k-\sqrt{k^{2}+4s}\right) /2$ $%
\left[ 5\right] $.

There are a large number of studies concerning Pell equation in the
literature. G\"{u}ney $\left[ 1\right] $ solved the Pell equations $%
x^{2}-\left( a^{2}b^{2}+2b\right) y^{2}=N$ when $N\in \left\{ \pm 1,\pm
4\right\} $.

In this study, we consider the integer solutions of the Pell equations 
\begin{equation*}
x^{2}-\left( a^{2}b^{2}-b\right) y^{2}=N\text{ and }x^{2}-\left(
a^{2}b^{2}-2b\right) y^{2}=N
\end{equation*}%
in terms of the generalized Fibonacci and Lucas numbers.

\bigskip

\textbf{2. The Pell equation }$x^{2}-\left( a^{2}b^{2}-b\right) y^{2}=N$%
\textbf{\ }

\bigskip Firstly, we consider the integer solutions of the Pell equation%
\begin{equation}
E:x^{2}-\left( a^{2}b^{2}-b\right) y^{2}=1.  \tag{1}
\end{equation}

\begin{theorem}
Let $E$ be the Pell equation in $\left( 1\right) $ and $a\geq 2$. Then the
followings hold:

$\left( \mathbf{i}\right) $ The continued fraction expansion of $\sqrt{%
a^{2}b^{2}-b}$ is 
\begin{equation*}
\sqrt{a^{2}b^{2}-b}=\left\{ 
\begin{array}{c}
\left[ a-1;\overline{1,2a-2}\right] \\ 
\left[ ab-1;\overline{1,2a-2,1,2ab-2}\right] \text{\ \ \ \ }%
\end{array}%
\right. ,%
\begin{array}{c}
\text{if }b=1 \\ 
\text{if }b>1\text{.}%
\end{array}%
\end{equation*}

$\left( \mathbf{ii}\right) $ The fundamental solution is%
\begin{equation*}
\left( x_{1},y_{1}\right) =\left\{ 
\begin{array}{c}
\left( a,1\right) \\ 
\left( 2a^{2}b-1,2a\right) \text{\ \ \ \ }%
\end{array}%
\right. ,%
\begin{array}{c}
\text{if }b=1 \\ 
\text{if }b>1\text{.}%
\end{array}%
\end{equation*}

$\left( \mathbf{iii}\right) $ The n-th solution $\left( x_{n},y_{n}\right) $
can be find by%
\begin{equation*}
\frac{x_{n}}{y_{n}}=\left\{ 
\begin{array}{c}
\left[ a-1;\left( 1,2a-2\right) _{n-1},1\right] \\ 
\left[ ab-1;\left( 1,2a-2,1,2ab-2\right) _{n-1},1\right] \text{\ \ \ \ }%
\end{array}%
\right. ,%
\begin{array}{c}
\text{if }b=1 \\ 
\text{if }b>1\text{.}%
\end{array}%
\end{equation*}%
where $\left( 1,2a-2\right) _{n-1}$ and $\left( 1,2a-2,1,2ab-2\right) _{n-1}$
mean that there are $n-1$ successive terms $\left( 1,2a-2\right) $ and $%
\left( 1,2a-2,1,2ab-2\right) $, respectively.\qquad \qquad \qquad \qquad
\qquad

\begin{proof}
$\left( \mathbf{i}\right) $ If $b=1$, then%
\begin{equation*}
\begin{array}{c}
\sqrt{a^{2}-1}=a-1+\left( \sqrt{a^{2}-1}-\left( a-1\right) \right) =a-1+%
\frac{2a-2}{\sqrt{a^{2}-1}+a-1}=a-1+\frac{1}{\frac{\sqrt{a^{2}-1}+a-1}{2a-2}}
\\ 
=a-1+\frac{1}{1+\frac{\sqrt{a^{2}-1}-a+1}{2a-2}}=a-1+\frac{1}{1+\frac{1}{%
\sqrt{a^{2}-1}+a-1}}\text{ \ \ \ \ \ \ \ \ \ \ \ \ \ \ \ \ \ \ \ \ \ \ } \\ 
=a-1+\frac{1}{1+\frac{1}{\sqrt{a^{2}-1}+a-1+a-1-\left( a-1\right) }}=a-1+%
\frac{1}{1+\frac{1}{2a-2+\left( \sqrt{a^{2}-1}-\left( a-1\right) \right) }}.%
\end{array}%
\end{equation*}

Therefore $\sqrt{a^{2}-1}=\left[ a-1;\overline{1,2a-2}\right] $.

If $b>1$, then%
\begin{equation*}
\begin{array}{c}
\sqrt{a^{2}b^{2}-b}=ab-1+\left( \sqrt{a^{2}b^{2}-b}-(ab-1)\right) \text{ \ \
\ \ \ \ \ \ \ \ \ \ \ \ \ \ \ \ \ \ \ \ \ \ \ \ \ \ \ \ \ \ \ \ \ \ \ \ \ \
\ } \\ 
\\ 
=ab-1+\frac{1}{\frac{\sqrt{a^{2}b^{2}-b}+(ab-1)}{2ab-b-1}}=ab-1+\frac{1}{1+%
\frac{\sqrt{a^{2}b^{2}-b}-(ab-b)}{2ab-b-1}}\text{ \ \ } \\ 
\\ 
\text{ \ \ \ \ \ }=ab-1+\frac{1}{1+\frac{1}{\frac{\sqrt{a^{2}b^{2}-b}+\left(
ab-b\right) }{b}}}=ab-1+\frac{1}{1+\frac{1}{2a-2+\frac{\sqrt{a^{2}b^{2}-b}%
-\left( ab-b\right) }{b}}} \\ 
\\ 
\text{ \ \ \ \ \ \ \ \ \ \ \ \ \ \ \ \ }=ab-1+\frac{1}{1+\frac{1}{2a-2+\frac{%
1}{\frac{\sqrt{a^{2}b^{2}-b}+\left( ab-b\right) }{2ab-b-1}}}}=ab-1+\frac{1}{%
1+\frac{1}{2a-2+\frac{1}{1+\frac{1}{\sqrt{a^{2}b^{2}-b}+\left( ab-1\right) }}%
}} \\ 
\\ 
\text{ \ \ \ \ \ \ \ \ \ \ \ \ \ \ \ \ \ \ }=ab-1+\frac{1}{1+\frac{1}{2a-2+%
\frac{1}{1+\frac{1}{\frac{2ab-b-1}{\sqrt{a^{2}b^{2}-b}-\left( ab-1\right) }}}%
}}=ab-1+\frac{1}{1+\frac{1}{2a-2+\frac{1}{1+\frac{1}{\sqrt{a^{2}b^{2}-b}%
+\left( ab-1\right) }}}} \\ 
\\ 
\text{ \ }=ab-1+\frac{1}{1+\frac{1}{2a-2+\frac{1}{1+\frac{1}{2ab-2+\left( 
\sqrt{a^{2}b^{2}-b}-\left( ab-1\right) \right) }}}}.\text{ \ \ \ \ \ \ \ \ \
\ \ \ \ \ \ \ \ \ \ }%
\end{array}%
\end{equation*}%
This completes the proof.

$\left( \mathbf{ii}\right) $ If $b=1$, then from $x_{n}=p_{nm-1}$ and $%
y_{n}=q_{nm-1}$, we obtain $x_{1}=p_{1}$ and $y_{1}=q_{1}$. Therefore we
must find $p_{1}$ and $q_{1}$. It is easily seen that $%
p_{1}=a_{1}p_{0}+p_{-1}=a$ and $q_{1}=$ $a_{1}q_{0}+q_{-1}=1$. That is, the
fundamental solution of $x^{2}-\left( a^{2}-1\right) y^{2}=1$ is $\left(
x_{1},y_{1}\right) =\left( a,1\right) $.

If $b>1$, then from $x_{n}=p_{nm-1}$ and $y_{n}=q_{nm-1}$, we obtain $%
x_{1}=p_{3}$ and $y_{1}=q_{3}$. Therefore we must find $p_{3}$ and $q_{3}$.
Now, we can find in a different way with the help of $3th$ convergent of $%
\sqrt{a^{2}b^{2}-b}$.

$\frac{p_{3}}{q_{3}}=\left[ ab-1;1,2a-2,1\right] =ab-1+\frac{1}{1+\frac{1}{%
2a-2+\frac{1}{1}}}=\frac{2a^{2}b-1}{2a}$. Therefore $\left(
x_{1},y_{1}\right) =\left( 2a^{2}b-1,2a\right) $.

$\left( \mathbf{iii}\right) $ If $b=1$, then it is known that $\left(
x_{1},y_{1}\right) =\left( a,1\right) $. For $n=1$, we obtain $\frac{x_{1}}{%
y_{1}}=\left[ a-1;1\right] =a-1+\frac{1}{1}=\frac{a}{1}$. Hence it is true
for $n=1$.

We assume that $\left( x_{n},y_{n}\right) $ is a solution of $x^{2}-\left(
a^{2}-1\right) y^{2}=1.$ That is, $\frac{x_{n}}{y_{n}}%
=[a-1;(1,2a-2)_{n-1},1] $\text{.}

Now we must show that it holds for $\left( x_{n+1},y_{n+1}\right) $.%
\begin{equation*}
\begin{array}{c}
\frac{x_{n+1}}{y_{n+1}}=a-1+\frac{1}{1+\frac{1}{2a-2+\frac{1}{1+\frac{1}{%
2a-2+\frac{1}{...2a-2+1}}}}}\text{ \ \ \ \ \ } \\ 
\text{\ \ \ \ \ \ \ } \\ 
\text{ \ \ \ \ }=a-1+\frac{1}{1+\frac{1}{a-1+a-1+\frac{1}{1+\frac{1}{2a-2+%
\frac{1}{...2a-2+1}}}}} \\ 
\\ 
=a-1+\frac{1}{1+\frac{1}{a-1+\frac{x_{n}}{y_{n}}}}\text{ \ \ \ \ \ \ \ \ \ \
\ \ \ \ \ \ } \\ 
\text{\ \ } \\ 
\text{ }=\frac{ax_{n}+\left( a^{2}-1\right) y_{n}}{ay_{n}+x_{n}}.\text{ \ \
\ \ \ \ \ \ \ \ \ \ \ \ \ \ \ \ \ \ \ \ \ }%
\end{array}%
\end{equation*}

$\left( x_{n+1},y_{n+1}\right) $ is a solution of $x^{2}-\left(
a^{2}-1\right) y^{2}=1$ since $x_{n+1}^{2}-\left( a^{2}-1\right)
y_{n+1}^{2}=\left( ax_{n}+\left( a^{2}-1\right) y_{n}\right) ^{2}-\left(
a^{2}-1\right) \left( ay_{n}+x_{n}\right) ^{2}=x_{n}^{2}-\left(
a^{2}-1\right) y_{n}^{2}=1.$

If $b>1$, then the proof is made by induction in a similar way.
\end{proof}
\end{theorem}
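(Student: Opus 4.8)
The plan is to establish all three parts from the standard continued-fraction algorithm for a quadratic irrational, together with the period-to-solution dictionary recalled in the Introduction. For part $(\mathbf{i})$ I would run the algorithm with complete quotients $\alpha_k=(P_k+\sqrt{d})/Q_k$, where $d=a^2b^2-b$, governed by $P_0=0$, $Q_0=1$, $a_k=\lfloor\alpha_k\rfloor$, $P_{k+1}=a_kQ_k-P_k$, and $Q_{k+1}=(d-P_{k+1}^2)/Q_k$. First I would pin down $a_0=\lfloor\sqrt{d}\rfloor$: for $b=1$ the bounds $(a-1)^2<a^2-1<a^2$ give $a_0=a-1$, while for $b>1$ the bounds $(ab-1)^2<a^2b^2-b<(ab)^2$ (valid since $a\ge 2$) give $a_0=ab-1$. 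Then I would iterate, tracking the integer pairs $(P_k,Q_k)$ until a state repeats; periodicity is automatic for $\sqrt{d}$ and is signalled by the reappearance of $a_k=2a_0$. For $b=1$ the cycle closes after two steps, yielding the block $\overline{1,2a-2}$; for $b>1$ it closes after four, yielding $\overline{1,2a-2,1,2ab-2}$. This reproduces the telescoping displayed in the excerpt, but routing it through the $(P_k,Q_k)$ recursion makes the periodicity transparent rather than something to be spotted only at the end.

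For part $(\mathbf{ii})$, with the period length $m$ in hand ($m=2$ when $b=1$ and $m=4$ when $b>1$, both even), I would apply the recalled rule that the fundamental solution of $x^2-dy^2=1$ is $(p_{m-1},q_{m-1})$. The needed convergents follow from $p_k=a_kp_{k-1}+p_{k-2}$ and $q_k=a_kq_{k-1}+q_{k-2}$ with the stated seeds: for $b=1$ this gives $(p_1,q_1)=(a,1)$, and for $b>1$ evaluating $[ab-1;1,2a-2,1]$ gives $(2a^2b-1,2a)$. This step is pure arithmetic once $(\mathbf{i})$ is established.

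Part $(\mathbf{iii})$ I would prove by induction on $n$, with the base case $n=1$ recovering part $(\mathbf{ii})$. The heart of the inductive step is to show that appending one further period to $[a_0;(\text{period})_{n-1},1]$ re-exposes $x_n/y_n$ inside the nest: writing the leading $2a_0$ of the appended block as $a_0+a_0$ lets the tail reassemble into $a_0+x_n/y_n$ (for $b=1$, $a_0=a-1$), which produces the recurrence $x_{n+1}/y_{n+1}=(ax_n+(a^2-1)y_n)/(x_n+ay_n)$, i.e. multiplication by the fundamental unit $x_1+y_1\sqrt{d}$. I would then close the induction through the norm identity $x_{n+1}^2-dy_{n+1}^2=(x_n^2-dy_n^2)(x_1^2-dy_1^2)=1$. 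The main obstacle I anticipate is precisely this fold: identifying which partial quotient to split and checking that the emergent $2\times 2$ substitution really is the matrix of the fundamental unit. For $b>1$ the period-four block lengthens the fold and makes the intermediate algebra heavier, so I would set the step up once abstractly in terms of the fundamental-unit matrix rather than redoing the manipulation term by term.
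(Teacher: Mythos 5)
Your proposal is correct and takes essentially the same route as the paper: the same continued-fraction computation in (i) (the paper's telescoping produces exactly the complete quotients $(P_k+\sqrt{d})/Q_k$ that your recursion tracks, with the bonus that your version makes the floor verifications explicit), the same $(p_{m-1},q_{m-1})$ rule and convergent arithmetic in (ii), and the same induction in (iii) --- splitting the doubled partial quotient $2a_0$ as $a_0+a_0$ to re-expose $x_n/y_n$, deriving the fundamental-unit recurrence, and closing with the norm identity. The only point where you go beyond the paper is the $b>1$ case of (iii), which the paper dismisses as ``similar'' and you propose to handle uniformly with the fundamental-unit matrix.
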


\begin{theorem}
All positive integer solutions of the equation $x^{2}-\left(
a^{2}b^{2}-b\right) y^{2}=1$ are given by%
\begin{equation*}
\left( x_{n},y_{n}\right) =\left\{ 
\begin{array}{c}
\left( \left( V_{n}\left( 2a,-1\right) \right) /2,U_{n}\left( 2a,-1\right)
\right) \\ 
\text{\ \ \ \ }\left( \left( V_{n}\left( 4a^{2}b-2,-1\right) \right)
/2,2aU_{n}\left( 4a^{2}b-2,-1\right) \right)%
\end{array}%
\right. ,%
\begin{array}{c}
\text{if }b=1 \\ 
\text{if }b>1%
\end{array}%
\end{equation*}%
with $n\geq 1$.
\end{theorem}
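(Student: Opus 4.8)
The plan is to combine the explicit fundamental solution from Theorem 5$(\mathbf{ii})$ with the generating formula $x_{n}+y_{n}\sqrt{d}=\left( x_{1}+y_{1}\sqrt{d}\right) ^{n}$ recalled in the preliminaries, and then to recognize the resulting powers as generalized Lucas and Fibonacci numbers through Binet's formulas. Writing $d=a^{2}b^{2}-b$, I would set $\alpha =x_{1}+y_{1}\sqrt{d}$ and $\beta =x_{1}-y_{1}\sqrt{d}$, so that $\alpha \beta =x_{1}^{2}-dy_{1}^{2}=1$ (since $\left( x_{1},y_{1}\right) $ solves $E$) and $\alpha +\beta =2x_{1}$. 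Because every positive solution satisfies $x_{n}+y_{n}\sqrt{d}=\alpha ^{n}$, taking algebraic conjugates gives $x_{n}-y_{n}\sqrt{d}=\beta ^{n}$, whence
\begin{equation*}
x_{n}=\frac{\alpha ^{n}+\beta ^{n}}{2},\qquad y_{n}=\frac{\alpha ^{n}-\beta ^{n}}{2\sqrt{d}}.
\end{equation*}
The whole argument then reduces to identifying $\alpha ,\beta $ with the characteristic roots of the appropriate sequence parameters $\left( k,s\right) $.

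First I would treat the case $b=1$, where $d=a^{2}-1$ and $\left( x_{1},y_{1}\right) =\left( a,1\right) $ by Theorem 5$(\mathbf{ii})$. Here $\alpha =a+\sqrt{a^{2}-1}$ and $\beta =a-\sqrt{a^{2}-1}$ are precisely the roots of $t^{2}-2at+1=0$, i.e. the characteristic roots for $k=2a$, $s=-1$, since $\alpha +\beta =2a$ and $\alpha \beta =1$. Binet's formulas then yield $V_{n}\left( 2a,-1\right) =\alpha ^{n}+\beta ^{n}=2x_{n}$ and $U_{n}\left( 2a,-1\right) =\left( \alpha ^{n}-\beta ^{n}\right) /\left( \alpha -\beta \right) =\left( \alpha ^{n}-\beta ^{n}\right) /\left( 2\sqrt{a^{2}-1}\right) =y_{n}$, which is exactly the claimed formula for $b=1$.

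Next I would treat $b>1$, where $\left( x_{1},y_{1}\right) =\left( 2a^{2}b-1,2a\right) $ by Theorem 5$(\mathbf{ii})$. The only extra computation is to confirm that $\alpha ,\beta $ are the characteristic roots for $k=4a^{2}b-2$ and $s=-1$: one has $\alpha +\beta =2x_{1}=4a^{2}b-2=k$ and $\alpha \beta =1=-s$. The key algebraic identity is $\left( 2a^{2}b-1\right) ^{2}-1=4a^{2}\left( a^{2}b^{2}-b\right) =4a^{2}d$, which shows that $\alpha -\beta =2y_{1}\sqrt{d}=4a\sqrt{d}=\sqrt{k^{2}+4s}$ and hence aligns the pair $\alpha ,\beta $ with Binet's $\alpha ,\beta $. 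Then $V_{n}\left( 4a^{2}b-2,-1\right) =\alpha ^{n}+\beta ^{n}=2x_{n}$ gives the first coordinate, while $U_{n}\left( 4a^{2}b-2,-1\right) =\left( \alpha ^{n}-\beta ^{n}\right) /\left( 4a\sqrt{d}\right) =y_{n}/\left( 2a\right) $ gives $y_{n}=2a\,U_{n}\left( 4a^{2}b-2,-1\right) $, as asserted.

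Since the statement is essentially a verification, there is no deep obstacle; the one point requiring care is checking that the prescribed parameters $\left( k,s\right) $ reproduce exactly the conjugate pair $\alpha ,\beta $ — in particular the identity $\left( 2a^{2}b-1\right) ^{2}-1=4a^{2}d$ in the case $b>1$, which is what matches $\sqrt{k^{2}+4s}$ with $2y_{1}\sqrt{d}$. Completeness of the solution list (that these exhaust all positive solutions) imposes no additional work, because it is already guaranteed by the generating formula $x_{n}+y_{n}\sqrt{d}=\left( x_{1}+y_{1}\sqrt{d}\right) ^{n}$ stated in the preliminaries together with the fundamental solutions supplied by Theorem 5. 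An induction on $n$ using the sequence recurrences would furnish an equivalent proof, but the Binet identification above is the more direct route.
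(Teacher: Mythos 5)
Your proposal is correct and follows essentially the same route as the paper's own proof: both take the fundamental solution $(x_{1},y_{1})$ from the preceding theorem, use the power formula $x_{n}+y_{n}\sqrt{d}=\left(x_{1}+y_{1}\sqrt{d}\right)^{n}$ together with its conjugate, and then identify $x_{n}=\left(\alpha^{n}+\beta^{n}\right)/2$ and $y_{n}=\left(\alpha^{n}-\beta^{n}\right)/\left(2\sqrt{d}\right)$ with $V_{n}/2$ and the appropriate multiple of $U_{n}$ via Binet's formulas. The only (harmless) difference is one of completeness: you verify explicitly that $\alpha,\beta$ are the characteristic roots for $(k,s)=(4a^{2}b-2,-1)$ and work out the $b=1$ case in full, whereas the paper computes only the $b>1$ case and dismisses $b=1$ as similar.
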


\begin{proof}
If $b>1$, then the followings hold:

By Theorem 4-ii, all positive integer solutions of the equation $%
x^{2}-\left( a^{2}b^{2}-b\right) y^{2}=1$ are given by%
\begin{equation*}
x_{n}+y_{n}\sqrt{a^{2}b^{2}-b}=\left( 2a^{2}b-1+2a\sqrt{a^{2}b^{2}-b}\right)
^{n}
\end{equation*}%
with $n\geq 1$. Assume that $\alpha =2a^{2}b-1+2a\sqrt{a^{2}b^{2}-b}$ and $%
\beta =2a^{2}b-1-2a\sqrt{a^{2}b^{2}-b}$. Then $\alpha -\beta =4a\sqrt{%
a^{2}b^{2}-b}$.%
\begin{equation*}
x_{n}+y_{n}\sqrt{a^{2}b^{2}-b}=\alpha ^{n}
\end{equation*}%
and%
\begin{equation*}
x_{n}-y_{n}\sqrt{a^{2}b^{2}-b}=\beta ^{n}.
\end{equation*}

Therefore $x_{n}=\frac{\alpha ^{n}+\beta ^{n}}{2}=\frac{V_{n}\left(
4a^{2}b-2,-1\right) }{2}$ and $y_{n}=\frac{\alpha ^{n}-\beta ^{n}}{2\sqrt{%
a^{2}b^{2}-b}}=2a\frac{\alpha ^{n}-\beta ^{n}}{\alpha -\beta }=2aU_{n}\left(
4a^{2}b-2,-1\right) .$ That is, $\left( x_{n},y_{n}\right) =\left( \frac{%
V_{n}\left( 4a^{2}b-2,-1\right) }{2},2aU_{n}\left( 4a^{2}b-2,-1\right)
\right) .$

Similarly it can be shown that if $b=1$, then $\left( x_{n},y_{n}\right)
=\left( \frac{V_{n}\left( 2a,-1\right) }{2},U_{n}\left( 2a,-1\right) \right)
.$
\end{proof}

\begin{theorem}
The Pell equation $x^{2}-\left( a^{2}b^{2}-b\right) y^{2}=-1$ has no
positive integer solutions.
\end{theorem}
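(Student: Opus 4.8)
The plan is to deduce the statement directly from the continued-fraction criterion recalled in the Introduction, namely that the negative Pell equation $x^{2}-dy^{2}=-1$ has positive integer solutions if and only if the period length $m$ of the simple continued fraction expansion of $\sqrt{d}$ is odd. Since this expansion has already been determined in Theorem 4 $\left( \mathbf{i}\right)$, the entire argument reduces to reading off the period and checking its parity.

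First I would split into the two cases of Theorem 4 $\left( \mathbf{i}\right)$. When $b=1$ we have $\sqrt{a^{2}b^{2}-b}=\sqrt{a^{2}-1}=\left[ a-1;\overline{1,2a-2}\right]$, so the repeating block is $1,2a-2$ and the period is $m=2$. When $b>1$ we have $\sqrt{a^{2}b^{2}-b}=\left[ ab-1;\overline{1,2a-2,1,2ab-2}\right]$, so the repeating block is $1,2a-2,1,2ab-2$ and the period is $m=4$. In both cases $m$ is even.

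Applying the criterion, an even period $m$ forces $x^{2}-\left( a^{2}b^{2}-b\right) y^{2}=-1$ to have no positive integer solutions, which is exactly the claim.

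I do not expect a substantive obstacle, since the genuinely hard step, producing the explicit periodic expansion of $\sqrt{a^{2}b^{2}-b}$, was already carried out in Theorem 4. The only point needing a word of care is that the overline blocks exhibited above are the \emph{minimal} periods, so that the parity conclusion is the intended one: for $b=1$ a shorter period would force $1=2a-2$, impossible for integer $a\geq 2$, and for $b>1$ a period of length $2$ would force $2a-2=2ab-2$, i.e. $b=1$, a contradiction; hence $m=2$ and $m=4$ are genuine. As an independent sanity check one can also verify individual cases congruentially, e.g. for $b=1,\ a=2$ the equation $x^{2}-3y^{2}=-1$ is already unsolvable modulo $3$, but the uniform argument is the parity-of-period one given above.
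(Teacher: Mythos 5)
Your proof is correct and follows essentially the same route as the paper: the paper's own (one-line) proof likewise reads off the even period lengths ($m=2$ for $b=1$, $m=4$ for $b>1$) from Theorem 4(i) and invokes the criterion from the Introduction that an even period rules out solutions of $x^{2}-dy^{2}=-1$. Your additional verification that these periods are minimal is a small refinement the paper omits, but it does not change the argument's structure.
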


\begin{proof}
The lenghts of the period of continued fraction $\sqrt{a^{2}b^{2}-b}$ are
even, then this equation has no positive integer solutions.
\end{proof}

\begin{theorem}
The fundamental solution of the Pell equation $x^{2}-\left(
a^{2}b^{2}-b\right) y^{2}=4$ is%
\begin{equation*}
\left( x_{1},y_{1}\right) =\left\{ 
\begin{array}{c}
\left( 2a,2\right) \\ 
\left( 4a^{2}b-2,4a\right) \text{\ \ \ \ }%
\end{array}%
\right. ,%
\begin{array}{c}
\text{if }b=1 \\ 
\text{if }b>1\text{.}%
\end{array}%
\end{equation*}
\end{theorem}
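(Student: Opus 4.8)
The plan is to recognize the asserted pair as $\left(2x_{1},2y_{1}\right)$, where $\left(x_{1},y_{1}\right)$ is the fundamental solution of the unit equation $x^{2}-dy^{2}=1$ already found in Theorem 4-ii: that solution is $\left(a,1\right)$ for $b=1$ and $\left(2a^{2}b-1,2a\right)$ for $b>1$, so doubling gives precisely $\left(2a,2\right)$ and $\left(4a^{2}b-2,4a\right)$. Thus the statement asserts that the fundamental solution of $x^{2}-dy^{2}=4$ is twice that of $x^{2}-dy^{2}=1$, and the natural route is through the cited Theorems 2 and 3, which make exactly this comparison according to the residue of $d$ modulo $4$. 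First I would record the candidate by direct substitution: the identities $\left(2a\right)^{2}-\left(a^{2}-1\right)2^{2}=4$ and $\left(4a^{2}b-2\right)^{2}-\left(a^{2}b^{2}-b\right)\left(4a\right)^{2}=4$ show that the claimed pair is a genuine positive solution in each case, so only its minimality remains in question.

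Next I would locate $d=a^{2}b^{2}-b$ modulo $4$ from the parities of $a$ and $b$. A short check shows that $d\equiv 1,2,$ or $3\pmod 4$ in every case except when $4\mid b$ or when $a$ is odd and $b\equiv 1\pmod 4$, in which cases $d\equiv 0\pmod 4$. Whenever $d\equiv 1,2,3\pmod 4$, Theorem 3 applies directly to the fundamental solution $\left(x_{1},y_{1}\right)$ of Theorem 4-ii and returns $\left(2x_{1},2y_{1}\right)$, which is exactly the asserted pair; this settles all of these cases simultaneously.

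The case $d\equiv 0\pmod 4$ is where Theorem 3 no longer applies and Theorem 2 must be used instead, and I expect it to be the main obstacle. When $b=1$ it is harmless and can be settled directly: a solution of $x^{2}-\left(a^{2}-1\right)y^{2}=4$ with $y=1$ would give $x^{2}=a^{2}+3$, i.e. $\left(x-a\right)\left(x+a\right)=3$, forcing $a=1$ against $a\geq 2$; hence the smallest solution has $y=2$ and equals $\left(2a,2\right)$, regardless of the parity of $a$. When $b>1$ and $d\equiv 0\pmod 4$ (that is, $4\mid b$, or $a$ odd and $b\equiv 1\pmod 4$) the argument is genuinely delicate, because Theorem 2 ties the fundamental solution of $x^{2}-dy^{2}=4$ to that of the reduced equation $x^{2}-\left(d/4\right)y^{2}=1$, and the continued fraction of $\sqrt{d}$ from Theorem 4-i can possess an intermediate convergent $p_{k}/q_{k}$ with $p_{k}^{2}-dq_{k}^{2}=\pm 4$ and $q_{k}<4a$. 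The careful step, and the one I would concentrate on, is to examine these convergents, decide for which $a,b$ none of them yields a smaller solution, and thereby either confirm the minimality of $\left(4a^{2}b-2,4a\right)$ or pin down the additional restriction on $a$ and $b$ under which the stated formula holds.
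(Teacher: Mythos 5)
Your mod-$4$ case analysis is already more careful than the paper's own proof, which consists of the single sentence ``It is obvious from Theorem 3 and Theorem 4-ii'' --- that is, the paper doubles the fundamental solution of $x^{2}-dy^{2}=1$ and invokes Theorem 3 without ever checking its hypothesis $d\not\equiv 0\pmod 4$. Your observation that $d=a^{2}b^{2}-b$ can in fact be divisible by $4$ (when $4\mid b$, or when $a$ is odd and $b\equiv 1\pmod 4$, which includes every odd $a$ with $b=1$) identifies exactly the hole in that argument, and your elementary treatment of $b=1$ (ruling out $y=1$ via $(x-a)(x+a)=3$) correctly closes the hole in that subcase.

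However, the case you flagged as unresolved --- $b>1$ with $d\equiv 0\pmod 4$ --- cannot be closed, because the theorem is false there. Take $a=2$, $b=4$, so $d=60$. The claimed fundamental solution is $(4a^{2}b-2,4a)=(62,8)$, yet $8^{2}-60\cdot 1^{2}=4$, so the fundamental solution is actually $(8,1)$. (Via Theorem 2: $d/4=15$, whose unit equation has fundamental solution $(4,1)$, giving $(8,1)$.) Note that $8/1$ is precisely the intermediate convergent $p_{1}/q_{1}$ of $\sqrt{60}=[7;\overline{1,2,1,14}\,]$ whose existence you anticipated. The general picture in the subcase $4\mid b$: writing $b=4c$, one has $d/4=(2a)^{2}c^{2}-c$, which lies in the same family with parameters $(2a,c)$, so Theorem 2 combined with Theorem 4-ii shows that the claimed pair $(4a^{2}b-2,4a)$ is fundamental if and only if $c>1$; it fails exactly when $b=4$. (For $a$ odd and $b\equiv 1\pmod 4$ the formula can still hold, e.g.\ $a=3$, $b=5$, $d=220$, where $d/4=55$ has fundamental solution $(89,12)=(2a^{2}b-1,4a)$, consistent with the claim.) So the additional restriction on $a,b$ that you suspected might be required is genuinely required: as stated, the theorem admits no proof, and the paper's one-line argument silently breaks precisely in the case you isolated.
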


\begin{proof}
It is obvious from Theorem 3 and Theorem 4-ii.
\end{proof}

\begin{theorem}
All positive integer solutions of the equation $x^{2}-\left(
a^{2}b^{2}-b\right) y^{2}=4$ are given by%
\begin{equation*}
\left( x_{n},y_{n}\right) =\left\{ 
\begin{array}{c}
\left( V_{n}\left( 2a,-1\right) ,2U_{n}\left( 2a,-1\right) \right) \\ 
\text{\ \ \ \ }\left( V_{n}\left( 4a^{2}b-2,-1\right) ,4aU_{n}\left(
4a^{2}b-2,-1\right) \right)%
\end{array}%
\right. ,%
\begin{array}{c}
\text{if }b=1 \\ 
\text{if }b>1%
\end{array}%
\end{equation*}%
with $n\geq 1$.
\end{theorem}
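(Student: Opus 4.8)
The plan is to mirror the proof of Theorem 5, replacing the Pell equation $x^{2}-\left( a^{2}b^{2}-b\right) y^{2}=1$ with the companion equation $x^{2}-\left( a^{2}b^{2}-b\right) y^{2}=4$ and using the fundamental solution supplied by Theorem 7. First I would invoke the standard fact that, once the fundamental solution $\left( x_{1},y_{1}\right) $ of $x^{2}-dy^{2}=4$ is known, every positive integer solution is obtained from
\begin{equation*}
\frac{x_{n}+y_{n}\sqrt{d}}{2}=\left( \frac{x_{1}+y_{1}\sqrt{d}}{2}\right) ^{n},\qquad n\geq 1,
\end{equation*}
so the whole problem reduces to expanding this $n$-th power and reading off $x_{n}$ and $y_{n}$.

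Next, writing $d=a^{2}b^{2}-b$, I would set $\alpha =\frac{x_{1}+y_{1}\sqrt{d}}{2}$ and $\beta =\frac{x_{1}-y_{1}\sqrt{d}}{2}$, so that $x_{n}=\alpha ^{n}+\beta ^{n}$ and $y_{n}\sqrt{d}=\alpha ^{n}-\beta ^{n}$. The key computation is the pair $\alpha +\beta =x_{1}$ and $\alpha \beta =\left( x_{1}^{2}-dy_{1}^{2}\right) /4=4/4=1$. Comparing with the Binet parameters $\alpha +\beta =k$, $\alpha \beta =-s$ of the sequences $U_{n}\left( k,s\right) $ and $V_{n}\left( k,s\right) $, this forces $s=-1$ and $k=x_{1}$, whence $x_{n}=\alpha ^{n}+\beta ^{n}=V_{n}\left( x_{1},-1\right) $ directly from Binet's formula. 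For the second coordinate I would factor $y_{n}=\frac{\alpha ^{n}-\beta ^{n}}{\sqrt{d}}=\frac{\alpha ^{n}-\beta ^{n}}{\alpha -\beta }\cdot \frac{\alpha -\beta }{\sqrt{d}}$ and note that $\alpha -\beta =y_{1}\sqrt{d}$, so $\frac{\alpha -\beta }{\sqrt{d}}=y_{1}$; thus $y_{n}=y_{1}\,U_{n}\left( x_{1},-1\right) $. Substituting the two cases of Theorem 7 then finishes the argument: when $b=1$ we have $\left( x_{1},y_{1}\right) =\left( 2a,2\right) $, giving $\left( x_{n},y_{n}\right) =\left( V_{n}\left( 2a,-1\right) ,\,2U_{n}\left( 2a,-1\right) \right) $, and when $b>1$ we have $\left( x_{1},y_{1}\right) =\left( 4a^{2}b-2,4a\right) $, giving $\left( x_{n},y_{n}\right) =\left( V_{n}\left( 4a^{2}b-2,-1\right) ,\,4aU_{n}\left( 4a^{2}b-2,-1\right) \right) $.

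I expect the only genuinely delicate point to be the completeness claim in the first step, namely that powers of $\frac{x_{1}+y_{1}\sqrt{d}}{2}$ really exhaust all positive solutions of the $=4$ equation rather than merely producing some of them. This is where Theorem 3 does the work: it identifies the fundamental solution of $x^{2}-dy^{2}=4$ as twice the fundamental solution of $x^{2}-dy^{2}=1$, so that $\alpha $ coincides with the fundamental unit $x_{1}^{\prime }+y_{1}^{\prime }\sqrt{d}$ of the $=1$ equation already analysed in Theorem 5 (indeed $\frac{4a^{2}b-2+4a\sqrt{d}}{2}=2a^{2}b-1+2a\sqrt{d}$ for $b>1$, and $\frac{2a+2\sqrt{d}}{2}=a+\sqrt{d}$ for $b=1$). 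Consequently the solution set of $=4$ is exactly twice the solution set of $=1$, and the displayed formulas are simply the doubled versions of those in Theorem 5; everything else is the routine Binet bookkeeping indicated above, which I would carry out separately for the two cases $b=1$ and $b>1$ but which proceeds identically in each.
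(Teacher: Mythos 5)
Your proposal is correct and takes essentially the same route as the paper: the paper's entire proof is the one-line remark that the result is immediate from Theorem 3 and Theorem 5 (every solution of the $=4$ equation being twice a solution of the $=1$ equation), which is exactly the completeness argument in your final paragraph. The explicit Binet bookkeeping you add is a consistent elaboration that simply re-derives the doubled versions of Theorem 5's formulas, mirroring the paper's own proof of Theorem 5.
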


\begin{proof}
It is trivial from Theorem 3 and Theorem 5.
\end{proof}

\begin{corollary}
All positive integer solutions of the equation $x^{2}-\left( 9k^{2}-3\right)
y^{2}=1$ are given by%
\begin{equation*}
\left( x_{n},y_{n}\right) =\ \left( \left( V_{n}\left( 12k^{2}-2,-1\right)
\right) /2,2kU_{n}\left( 12k^{2}-2,-1\right) \right)
\end{equation*}%
with $n\geq 1$.
\end{corollary}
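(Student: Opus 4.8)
The plan is to obtain this statement as a direct specialization of Theorem 5 rather than by any fresh computation. The key preliminary step is to recognize $9k^{2}-3$ as an instance of the canonical modulus $a^{2}b^{2}-b$ studied throughout Section 2. Writing $9k^{2}-3 = k^{2}\cdot 3^{2}-3$, I would set $a=k$ and $b=3$, so that $d=a^{2}b^{2}-b = 9k^{2}-3$. This matching is the only substantive observation in the argument, and it is immediate once one looks for the factor $b=3$ in the constant term.

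Since the resulting value $b=3$ satisfies $b>1$, the relevant branch of Theorem 5 is its $b>1$ case, which asserts that all positive integer solutions of $x^{2}-(a^{2}b^{2}-b)y^{2}=1$ are given by $(x_{n},y_{n})=\bigl(V_{n}(4a^{2}b-2,-1)/2,\,2aU_{n}(4a^{2}b-2,-1)\bigr)$. I would then simply substitute $a=k$ and $b=3$ into the two parameters appearing here: the recurrence parameter becomes $4a^{2}b-2=12k^{2}-2$, and the scalar $2a$ becomes $2k$. With these substitutions the pair reads $\bigl(V_{n}(12k^{2}-2,-1)/2,\,2kU_{n}(12k^{2}-2,-1)\bigr)$, which is exactly the expression claimed in the corollary.

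The main (and essentially only) obstacle is bookkeeping of hypotheses rather than mathematics. Theorem 5, through its reliance on Theorem 4, carries the standing assumption $a\geq 2$; under the substitution $a=k$ this becomes the restriction $k\geq 2$, and I would flag this so that the specialization is applied within its proven range. I would also note in passing that $d=9k^{2}-3\equiv 1\ (\operatorname{mod}4)$ when $k$ is odd and $\equiv 1\ (\operatorname{mod}4)$ patterns play no role here, since we are only invoking the $N=1$ result, which holds uniformly. No induction, no continued-fraction expansion, and no new identity for $U_{n}$ or $V_{n}$ are needed; the corollary follows by substitution and an appeal to Theorem 5.
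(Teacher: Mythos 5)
Your proposal is correct and is exactly the paper's (implicit) route: the paper states this corollary with no separate proof, and it is precisely Theorem 5 in its $b>1$ branch specialized to $a=k$, $b=3$, so that $a^{2}b^{2}-b=9k^{2}-3$, $4a^{2}b-2=12k^{2}-2$, and $2a=2k$. Your additional bookkeeping remark that the standing hypothesis $a\geq 2$ translates into $k\geq 2$ is a reasonable caveat the paper itself omits (your parenthetical mod-$4$ aside contains a slip, since $9k^{2}-3\equiv 2\ (\func{mod}4)$ for odd $k$, but as you note it plays no role here).
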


\begin{corollary}
All positive integer solutions of the equation $x^{2}-\left( 9k^{2}-3\right)
y^{2}=4$ are given by%
\begin{equation*}
\left( x_{n},y_{n}\right) =\left( V_{n}\left( 12k^{2}-2,-1\right)
,4kU_{n}\left( 12k^{2}-2,-1\right) \right)
\end{equation*}%
with $n\geq 1$.
\end{corollary}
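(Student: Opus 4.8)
The plan is to obtain this corollary as a direct specialization of Theorem 8, exactly as Corollary 1 specializes Theorem 5. First I would observe that the coefficient $9k^{2}-3$ fits the template $a^{2}b^{2}-b$: setting $a=k$ and $b=3$ gives $a^{2}b^{2}-b=k^{2}\cdot 9-3=9k^{2}-3$. Since this forces $b=3>1$, the relevant formula is the $b>1$ branch of Theorem 8, namely $\left( x_{n},y_{n}\right) =\left( V_{n}\left( 4a^{2}b-2,-1\right) ,4aU_{n}\left( 4a^{2}b-2,-1\right) \right)$. One also inherits the standing hypothesis $a\geq 2$ from Theorem 4, so the statement is understood for $k\geq 2$.

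The only computation is then to evaluate the two parameters under the substitution $a=k$, $b=3$. I would compute $4a^{2}b-2=4k^{2}\cdot 3-2=12k^{2}-2$ together with $4a=4k$, and feed these into the $b>1$ formula. This yields $\left( x_{n},y_{n}\right) =\left( V_{n}\left( 12k^{2}-2,-1\right) ,4kU_{n}\left( 12k^{2}-2,-1\right) \right)$ for every $n\geq 1$, which is precisely the claimed parametrization.

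There is essentially no obstacle here: the argument is a substitution into an already-proved theorem, and the case split $b=1$ versus $b>1$ causes no ambiguity because $b=3$ sits firmly in the $b>1$ regime. As an independent check one can instead route through Corollary 1 and Theorem 3: since $9k^{2}-3\equiv 1\pmod{4}$ when $k$ is even and $9k^{2}-3\equiv 2\pmod{4}$ when $k$ is odd, the discriminant is never $0\pmod{4}$, so Theorem 3 applies and the ``$=4$'' solutions arise from the ``$=1$'' solutions by doubling both coordinates (because $x_{n}+y_{n}\sqrt{d}=2\bigl(x_{n}^{\prime }+y_{n}^{\prime }\sqrt{d}\bigr)$ with the same $\sqrt{d}$). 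Doubling the entries of Corollary 1 reproduces $\left( V_{n}\left( 12k^{2}-2,-1\right) ,4kU_{n}\left( 12k^{2}-2,-1\right) \right)$, confirming the result.
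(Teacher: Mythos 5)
Your proposal is correct and takes the same route the paper intends: the paper states this corollary without proof, as an immediate specialization of Theorem 8, and your substitution $a=k$, $b=3$ (so that $a^{2}b^{2}-b=9k^{2}-3$, $4a^{2}b-2=12k^{2}-2$, and $4a=4k$) into the $b>1$ branch is exactly that derivation. Your supplementary check via Corollary 1 and Theorem 3 simply retraces how the paper itself obtains Theorem 8 from Theorem 5, so nothing further is needed.
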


\bigskip \textbf{3. The Pell equation }$x^{2}-\left( a^{2}b^{2}-2b\right)
y^{2}=N$\textbf{\ }

\bigskip Now, we consider the integer solutions of the Pell equation%
\begin{equation}
F:x^{2}-\left( a^{2}b^{2}-2b\right) y^{2}=1.  \tag{2}
\end{equation}

\begin{theorem}
Let $F$ be the Pell equation in $\left( 2\right) $ and $a\geq 3$. Then the
followings hold:

$\left( \mathbf{i}\right) $ The continued fraction expansion of $\sqrt{%
a^{2}b^{2}-2b}$ is 
\begin{equation*}
\sqrt{a^{2}b^{2}-2b}=\left[ ab-1;\overline{1,a-2,1,2ab-2}\right] .
\end{equation*}

$\left( \mathbf{ii}\right) $ The fundamental solution is%
\begin{equation*}
\left( x_{1},y_{1}\right) =\left( a^{2}b-1,a\right) .
\end{equation*}

$\left( \mathbf{iii}\right) $ The n-th solution $\left( x_{n},y_{n}\right) $
can be find by%
\begin{equation*}
\frac{x_{n}}{y_{n}}=\left[ ab-1;\left( 1,a-2,1,2ab-2\right) _{n-1},1\right]
\end{equation*}%
where $\left( 1,a-2,1,2ab-2\right) _{n-1}$ means that there are $n-1$
successive terms $\left( 1,a-2,1,2ab-2\right) $.\qquad \qquad \qquad \qquad
\qquad

\begin{proof}
$\left( \mathbf{i}\right) $ 
\begin{equation*}
\begin{array}{c}
\sqrt{a^{2}b^{2}-2b}=ab-1+\left( \sqrt{a^{2}b^{2}-2b}-(ab-1)\right) \text{ \
\ \ \ \ \ \ \ \ \ \ \ \ \ \ \ \ \ \ \ \ \ \ \ \ \ \ \ \ \ \ \ \ \ \ \ \ \ \
\ \ \ \ \ \ \ \ \ \ \ \ \ \ \ \ \ } \\ 
\\ 
=ab-1+\frac{1}{\frac{\sqrt{a^{2}b^{2}-2b}+(ab-1)}{2ab-2b-1}}=ab-1+\frac{1}{1+%
\frac{\sqrt{a^{2}b^{2}-2b}-(ab-2b)}{2ab-2b-1}}\text{ \ \ \ \ \ \ \ \ \ \ \ \
\ \ } \\ 
\\ 
=ab-1+\frac{1}{1+\frac{1}{\frac{\sqrt{a^{2}b^{2}-2b}+\left( ab-2b\right) }{2b%
}}}=ab-1+\frac{1}{1+\frac{1}{a-2+\frac{\sqrt{a^{2}b^{2}-2b}-\left(
ab-2b\right) }{2b}}}\text{ \ \ } \\ 
\\ 
\text{ \ \ \ }=ab-1+\frac{1}{1+\frac{1}{a-2+\frac{1}{\frac{\sqrt{%
a^{2}b^{2}-2b}+\left( ab-2b\right) }{2ab-2b-1}}}}=ab-1+\frac{1}{1+\frac{1}{%
a-2+\frac{1}{1+\frac{1}{\frac{2ab-2b-1}{\sqrt{a^{2}b^{2}-2b}-\left(
ab-1\right) }}}}} \\ 
\\ 
\text{ \ \ \ \ \ \ \ \ \ \ \ \ \ \ \ }=ab-1+\frac{1}{1+\frac{1}{a-2+\frac{1}{%
1+\frac{1}{\sqrt{a^{2}b^{2}-2b}+\left( ab-1\right) }}}}=ab-1+\frac{1}{1+%
\frac{1}{a-2+\frac{1}{1+\frac{1}{2ab-2+\left( \sqrt{a^{2}b^{2}-2b}-\left(
ab-1\right) \right) }}}}.%
\end{array}%
\end{equation*}%
This completes the proof.

$\left( \mathbf{ii}\right) $ From $x_{n}=p_{nm-1}$ and $y_{n}=q_{nm-1}$, we
obtain $x_{1}=p_{3}$ and $y_{1}=q_{3}$. Therefore we must find $p_{3}$ and $%
q_{3}$. Now, we can find with the help of $3th$ convergent of $\sqrt{%
a^{2}b^{2}-2b}$.

$\frac{p_{3}}{q_{3}}=\left[ ab-1;1,a-2,1\right] =ab-1+\frac{1}{1+\frac{1}{%
a-2+\frac{1}{1}}}=\frac{a^{2}b-1}{a}$. Therefore $\left( x_{1},y_{1}\right)
=\left( a^{2}b-1,a\right) $.

$\left( \mathbf{iii}\right) $ The proof is made by a similar manner as in
the proof of the Theorem 4-iii.
\end{proof}
\end{theorem}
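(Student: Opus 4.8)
The plan is to follow the same three-step template used for the equation $E$ in Theorem 4, exploiting the fact that $F$ is obtained from $E$ by replacing $-b$ with $-2b$ inside the radical; this collapses the two cases of Theorem 4 into the single periodic pattern $\overline{1,a-2,1,2ab-2}$ of even length $m=4$.

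For part $(\mathbf{i})$, I would compute the continued fraction expansion of $\sqrt{a^{2}b^{2}-2b}$ by the standard rationalization algorithm. First I would verify that $a_{0}=\lfloor\sqrt{a^{2}b^{2}-2b}\rfloor=ab-1$: the inequality $(ab-1)^{2}\le a^{2}b^{2}-2b$ reduces to $2b(a-1)\ge 1$, which holds for all $a\ge 3$ and $b\ge 1$, while $a^{2}b^{2}-2b<(ab)^{2}$ is immediate. Writing each complete quotient in the form $\bigl(\sqrt{d}+P_{k}\bigr)/Q_{k}$ and iterating $\xi_{k+1}=1/(\xi_{k}-a_{k})$ with $a_{k}=\lfloor\xi_{k}\rfloor$, I would then show by direct computation that four rationalization steps produce the partial quotients $1,\,a-2,\,1,\,2ab-2$ and return the complete quotient $\sqrt{d}+(ab-1)$, which certifies periodicity with period $\overline{1,a-2,1,2ab-2}$.

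For part $(\mathbf{ii})$, since the period length $m=4$ is even, the preliminary criterion on fundamental solutions gives $(x_{1},y_{1})=(p_{m-1},q_{m-1})=(p_{3},q_{3})$. I would evaluate the third convergent $p_{3}/q_{3}=[ab-1;1,a-2,1]$, either from the $p_{k},q_{k}$ recurrences or by collapsing the finite continued fraction from the bottom, obtaining $(a^{2}b-1)/a$, so that $(x_{1},y_{1})=(a^{2}b-1,a)$; a one-line check confirms $(a^{2}b-1)^{2}-(a^{2}b^{2}-2b)a^{2}=1$. For part $(\mathbf{iii})$, I would argue by induction on $n$ exactly as in Theorem 4$(\mathbf{iii})$: the base case is part $(\mathbf{ii})$, and for the inductive step I would use $x_{n+1}+y_{n+1}\sqrt{d}=(x_{1}+y_{1}\sqrt{d})(x_{n}+y_{n}\sqrt{d})$, which yields $x_{n+1}=(a^{2}b-1)x_{n}+a(a^{2}b^{2}-2b)y_{n}$ and $y_{n+1}=ax_{n}+(a^{2}b-1)y_{n}$; appending one fresh period $(1,a-2,1,2ab-2)$ to the convergent for $\frac{x_{n}}{y_{n}}$ reproduces precisely this transformation, and the identity $x_{n+1}^{2}-d\,y_{n+1}^{2}=(x_{1}^{2}-d\,y_{1}^{2})(x_{n}^{2}-d\,y_{n}^{2})=1$ shows $(x_{n+1},y_{n+1})$ is again a solution.

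I expect the only genuinely delicate point to be the bookkeeping in part $(\mathbf{i})$: each rationalization must be arranged so that the emerging partial quotient is a positive integer lying in the correct range, and this is exactly where the hypothesis $a\ge 3$ enters, since the partial quotient $a-2$ must satisfy $a-2\ge 1$. This fails for $a=2$ (where $a-2=0$ would collapse the continued fraction and destroy the pattern), which is why the hypothesis here is stronger than the $a\ge 2$ that sufficed for $E$. Once the four complete quotients $(P_{k},Q_{k})$ are pinned down and shown to close up, parts $(\mathbf{ii})$ and $(\mathbf{iii})$ follow routinely from the continued-fraction machinery recalled in the preliminaries.
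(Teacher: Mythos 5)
Your proposal is correct and follows essentially the same route as the paper: part $(\mathbf{i})$ by iterated rationalization producing the period $1,a-2,1,2ab-2$ of even length $m=4$, part $(\mathbf{ii})$ via $(x_{1},y_{1})=(p_{3},q_{3})$ computed from the third convergent $[ab-1;1,a-2,1]=(a^{2}b-1)/a$, and part $(\mathbf{iii})$ by the same induction the paper invokes from Theorem 4-$(\mathbf{iii})$, appending one period to pass from $(x_{n},y_{n})$ to $(x_{n+1},y_{n+1})$. Your additional observations (explicitly checking $\lfloor\sqrt{a^{2}b^{2}-2b}\rfloor=ab-1$, verifying the fundamental solution, and pinpointing that $a\geq 3$ is needed precisely so that the partial quotient $a-2$ is positive) are correct refinements of details the paper leaves implicit.
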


\begin{theorem}
All positive integer solutions of the equation $x^{2}-\left(
a^{2}b^{2}-2b\right) y^{2}=1$ are given by%
\begin{equation*}
\left( x_{n},y_{n}\right) =\left( \left( V_{n}\left( 2a^{2}b-2,-1\right)
\right) /2,aU_{n}\left( 2a^{2}b-2,-1\right) \right)
\end{equation*}%
with $n\geq 1$.
\end{theorem}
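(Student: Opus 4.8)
The plan is to mirror the proof of Theorem 5 almost verbatim, since Theorem 10-ii has already done the hard work of supplying the fundamental solution. First I would invoke Theorem 10-ii to assert that every positive integer solution of $x^{2}-\left( a^{2}b^{2}-2b\right) y^{2}=1$ arises from powers of the fundamental solution $\left( a^{2}b-1,a\right)$, that is,
\begin{equation*}
x_{n}+y_{n}\sqrt{a^{2}b^{2}-2b}=\left( a^{2}b-1+a\sqrt{a^{2}b^{2}-2b}\right) ^{n}
\end{equation*}
for $n\geq 1$. I would then set $\alpha =a^{2}b-1+a\sqrt{a^{2}b^{2}-2b}$ and let $\beta =a^{2}b-1-a\sqrt{a^{2}b^{2}-2b}$ be its algebraic conjugate.

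The key step is to recognize $\alpha$ and $\beta$ as the Binet roots for the parameter pair $\left( k,s\right) =\left( 2a^{2}b-2,-1\right)$. I would verify the three identities $\alpha +\beta =2a^{2}b-2$, $\alpha \beta =\left( a^{2}b-1\right) ^{2}-a^{2}\left( a^{2}b^{2}-2b\right) =1$, and $\alpha -\beta =2a\sqrt{a^{2}b^{2}-2b}$. The first two confirm that $\alpha ,\beta$ satisfy $t^{2}=\left( 2a^{2}b-2\right) t-1$, so with $k=2a^{2}b-2$ and $s=-1$ one gets $k^{2}+4s=4a^{2}\left( a^{2}b^{2}-2b\right)$ and hence $\alpha =\left( k+\sqrt{k^{2}+4s}\right) /2$, exactly the value above.

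Taking conjugates of the power relation gives $x_{n}-y_{n}\sqrt{a^{2}b^{2}-2b}=\beta ^{n}$ as well, and adding and subtracting the two relations yields $x_{n}=\left( \alpha ^{n}+\beta ^{n}\right) /2$ and $y_{n}=\left( \alpha ^{n}-\beta ^{n}\right) /\left( 2\sqrt{a^{2}b^{2}-2b}\right)$. Binet's formula for $V_{n}$ immediately gives $x_{n}=V_{n}\left( 2a^{2}b-2,-1\right) /2$. For $y_{n}$, I would substitute $\sqrt{a^{2}b^{2}-2b}=\left( \alpha -\beta \right) /\left( 2a\right)$ to obtain
\begin{equation*}
y_{n}=\frac{\alpha ^{n}-\beta ^{n}}{2\sqrt{a^{2}b^{2}-2b}}=a\,\frac{\alpha ^{n}-\beta ^{n}}{\alpha -\beta }=a\,U_{n}\left( 2a^{2}b-2,-1\right) ,
\end{equation*}
which is the claimed formula.

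I do not expect any genuine obstacle; the computation is routine once the parameters are identified. The only point requiring care is the bookkeeping of the factor $a$ arising from $y_{1}=a$, which is precisely what produces the coefficient $a$ in front of $U_{n}$ (in contrast to the $2a$ appearing in the $b>1$ case of Theorem 5, where $y_{1}=2a$). Finally, since the period length of the continued fraction of $\sqrt{a^{2}b^{2}-2b}$ is $m=4$, which is even, the positive powers $\alpha ^{n}$ already exhaust all positive solutions with no parity or sign subtleties, so no separate case analysis is needed.
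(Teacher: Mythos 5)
Your proposal is correct and takes essentially the same route as the paper's own proof: invoke the fundamental solution $\left( a^{2}b-1,a\right)$ (this is the paper's Theorem 9-ii, not 10-ii), conjugate the power relation $x_{n}+y_{n}\sqrt{a^{2}b^{2}-2b}=\alpha ^{n}$, and read off $x_{n}=\left( \alpha ^{n}+\beta ^{n}\right) /2=V_{n}\left( 2a^{2}b-2,-1\right) /2$ and $y_{n}=a\left( \alpha ^{n}-\beta ^{n}\right) /\left( \alpha -\beta \right) =aU_{n}\left( 2a^{2}b-2,-1\right)$ via Binet's formulas. Your explicit check that $\alpha +\beta =2a^{2}b-2$, $\alpha \beta =1$, and $k^{2}+4s=4a^{2}\left( a^{2}b^{2}-2b\right)$ is a small (and welcome) addition that the paper leaves implicit.
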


\begin{proof}
By Theorem 9-ii, all positive integer solutions of the equation $%
x^{2}-\left( a^{2}b^{2}-2b\right) y^{2}=1$ are given by%
\begin{equation*}
x_{n}+y_{n}\sqrt{a^{2}b^{2}-2b}=\left( a^{2}b-1+a\sqrt{a^{2}b^{2}-2b}\right)
^{n}
\end{equation*}%
with $n\geq 1$. Assume that $\alpha =a^{2}b-1+a\sqrt{a^{2}b^{2}-2b}$ and $%
\beta =a^{2}b-1-a\sqrt{a^{2}b^{2}-2b}$. Then $\alpha -\beta =2a\sqrt{%
a^{2}b^{2}-2b}$.%
\begin{equation*}
x_{n}+y_{n}\sqrt{a^{2}b^{2}-2b}=\alpha ^{n}
\end{equation*}%
and%
\begin{equation*}
x_{n}-y_{n}\sqrt{a^{2}b^{2}-2b}=\beta ^{n}.
\end{equation*}

Therefore $x_{n}=\frac{\alpha ^{n}+\beta ^{n}}{2}=\frac{V_{n}\left(
2a^{2}b-2,-1\right) }{2}$ and $y_{n}=\frac{\alpha ^{n}-\beta ^{n}}{2\sqrt{%
a^{2}b^{2}-2b}}=a\frac{\alpha ^{n}-\beta ^{n}}{\alpha -\beta }=aU_{n}\left(
2a^{2}b-2,-1\right) .$ That is, $\left( x_{n},y_{n}\right) =\left( \frac{%
V_{n}\left( 2a^{2}b-2,-1\right) }{2},aU_{n}\left( 2a^{2}b-2,-1\right)
\right) .$
\end{proof}

\begin{theorem}
The Pell equation $x^{2}-\left( a^{2}b^{2}-2b\right) y^{2}=-1$ has no
positive integer solutions.
\end{theorem}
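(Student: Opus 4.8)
The plan is to mirror the argument already used for Theorem 6, exploiting the parity of the period length of the continued fraction expansion. The only ingredient I need is the criterion recalled in the Preliminaries: when the period $m$ of the simple continued fraction expansion of $\sqrt{d}$ is even, the negative Pell equation $x^{2}-dy^{2}=-1$ has no integer solutions.

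First I would invoke Theorem 9(i), which gives
\[
\sqrt{a^{2}b^{2}-2b}=\left[ ab-1;\overline{1,a-2,1,2ab-2}\right].
\]
The repeating block here is $\left(1,a-2,1,2ab-2\right)$, so the period length is $m=4$. Since $m=4$ is even, the cited criterion applies directly and yields that $x^{2}-\left(a^{2}b^{2}-2b\right)y^{2}=-1$ has no positive integer solutions, which completes the proof.

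The only point deserving a moment's attention, though it is hardly an obstacle, is confirming that the displayed expansion really is the canonical simple continued fraction with period exactly $\left(1,a-2,1,2ab-2\right)$, i.e. that no shorter period is concealed inside it. This is guaranteed by the hypothesis $a\geq 3$, which ensures that the partial quotient $a-2$ is a genuine positive integer and that the expansion stands in standard reduced form; consequently the period length is unambiguously $4$. Thus no computation beyond reading off Theorem 9(i) is required, and the statement follows exactly as Theorem 6 did in Section 2.
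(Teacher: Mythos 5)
Your proposal is correct and follows exactly the paper's own argument: the paper's proof likewise reads off the even period length (four, from Theorem 9(i)) of the continued fraction expansion of $\sqrt{a^{2}b^{2}-2b}$ and applies the standard criterion that an even period rules out solutions of $x^{2}-dy^{2}=-1$. Your additional check that $a\geq 3$ makes the expansion a genuine simple continued fraction of period four is a sensible refinement the paper leaves implicit, but it does not change the route.
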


\begin{proof}
The lenghts of the period of continued fraction $\sqrt{a^{2}b^{2}-2b}$ is
even, then this equation has no positive integer solutions.
\end{proof}

\begin{theorem}
The fundamental solution of the Pell equation $x^{2}-\left(
a^{2}b^{2}-2b\right) y^{2}=4$ is%
\begin{equation*}
\left( x_{1},y_{1}\right) =\left( 2a^{2}b-2,2a\right) .
\end{equation*}
\end{theorem}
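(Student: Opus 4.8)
The plan is to read off the fundamental solution of the ``$=4$'' equation from that of the ``$=1$'' equation $F$, mirroring the deduction of Theorem~7 from Theorem~3 and Theorem~4(ii). By Theorem~9(ii) the fundamental solution of $x^{2}-\left( a^{2}b^{2}-2b\right) y^{2}=1$ is $\left( x_{1},y_{1}\right) =\left( a^{2}b-1,a\right) $. First I would confirm that the proposed pair is actually a solution: since $\left( 2a^{2}b-2,2a\right) =2\left( a^{2}b-1,a\right) $, substitution gives $\left( 2a^{2}b-2\right) ^{2}-\left( a^{2}b^{2}-2b\right) \left( 2a\right) ^{2}=4\left[ \left( a^{2}b-1\right) ^{2}-a^{2}\left( a^{2}b^{2}-2b\right) \right] =4\cdot 1=4$, the bracket collapsing to $1$ exactly because $\left( a^{2}b-1,a\right) $ satisfies $\left( 2\right) $.

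For the fundamental (minimal) nature I would invoke Theorem~3: when $d\equiv 1,2,3\ \left( \func{mod}4\right) $ the fundamental solution of $x^{2}-dy^{2}=4$ is precisely $\left( 2x_{1},2y_{1}\right) $, with $\left( x_{1},y_{1}\right) $ the fundamental solution of $x^{2}-dy^{2}=1$. Taking $d=a^{2}b^{2}-2b$ and $\left( x_{1},y_{1}\right) =\left( a^{2}b-1,a\right) $ gives $\left( 2x_{1},2y_{1}\right) =\left( 2a^{2}b-2,2a\right) $, which is exactly the claim; no independent continued-fraction computation is then needed.

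The step I expect to be the main obstacle is checking the hypothesis of Theorem~3, namely $d=a^{2}b^{2}-2b\not\equiv 0\ \left( \func{mod}4\right) $. A short parity computation gives $d\equiv 3\ \left( \func{mod}4\right) $ when $a$ and $b$ are both odd and $d\equiv 2\ \left( \func{mod}4\right) $ when $b$ is odd and $a$ is even, so Theorem~3 applies and the stated formula follows in those cases. When $b$ is even, however, $d\equiv 0\ \left( \func{mod}4\right) $, Theorem~3 no longer applies, and the appropriate instrument is Theorem~2 with $d/4=a^{2}\left( b/2\right) ^{2}-\left( b/2\right) $ (a Section~2 denominator), which can yield a strictly smaller fundamental solution than $\left( 2a^{2}b-2,2a\right) $. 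Thus the genuine difficulty lies in the parity of $b$: the odd-$b$ case is settled cleanly by the two-step argument above, while the even-$b$ case needs separate treatment and may force a correction to the stated formula.
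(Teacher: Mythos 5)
Your proposal takes the same two-step route as the paper, whose entire proof reads ``It is clear from Theorem 3 and Theorem 9-ii'': take the fundamental solution $\left( a^{2}b-1,a\right)$ of the unit equation from Theorem 9(ii) and double it via Theorem 3. The difference is that you actually check the hypothesis of Theorem 3, and the obstruction you flag is genuine: when $b$ is even, $d=a^{2}b^{2}-2b\equiv 0\pmod{4}$, so Theorem 3 does not apply, and the paper's one-line proof is incomplete. Your parity analysis for odd $b$ (giving $d\equiv 2$ or $3\pmod 4$) is correct, so in that case your argument, and the paper's, is sound.

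What your proposal leaves open is the final case split for even $b$, and completing it shows your suspicion that ``a correction may be forced'' is right, but only at a single value of $b$. Write $b=2t$, so $d/4=a^{2}t^{2}-t$, which is the Section~2 denominator with parameter $t$. If $t>1$, Theorem 4(ii) gives the fundamental solution $\left( 2a^{2}t-1,2a\right)$ of $x^{2}-\left( a^{2}t^{2}-t\right) y^{2}=1$, and Theorem 2 yields $\left( 2\left( 2a^{2}t-1\right) ,2a\right) =\left( 2a^{2}b-2,2a\right)$, so the stated formula survives for even $b\geq 4$. But if $t=1$, i.e.\ $b=2$, Theorem 4(ii) gives $\left( a,1\right)$, and Theorem 2 yields the fundamental solution $\left( 2a,1\right)$ of $x^{2}-\left( 4a^{2}-4\right) y^{2}=4$, which is strictly smaller than the claimed $\left( 4a^{2}-2,2a\right)$. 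Concretely, for $a=3$, $b=2$ the equation $x^{2}-32y^{2}=4$ has the solution $\left( 6,1\right)$, so the pair $\left( 34,6\right)$ claimed by the theorem, while a solution, is not fundamental. So the theorem as printed is false for $b=2$; your approach is the correct one, and had you pushed the even case through Theorem 2 you would have located exactly where the correction is needed.
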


\begin{proof}
It is clear from Theorem 3 and Theorem 9-ii.
\end{proof}

\begin{theorem}
All positive integer solutions of the equation $x^{2}-\left(
a^{2}b^{2}-2b\right) y^{2}=4$ are given by%
\begin{equation*}
\left( x_{n},y_{n}\right) =\left( V_{n}\left( 2a^{2}b-2,-1\right)
,2aU_{n}\left( 2a^{2}b-2,-1\right) \right)
\end{equation*}%
with $n\geq 1$.
\end{theorem}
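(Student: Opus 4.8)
The plan is to run the same Binet-formula computation as in Theorem 10, but starting from the fundamental solution of the $=4$ equation supplied by Theorem 12. By that theorem the fundamental solution of $x^{2}-\left( a^{2}b^{2}-2b\right) y^{2}=4$ is $\left( x_{1},y_{1}\right) =\left( 2a^{2}b-2,2a\right)$. For an equation of the shape $x^{2}-dy^{2}=4$ the positive solutions are generated by the rule recalled in the Introduction, namely
\begin{equation*}
\frac{x_{n}+y_{n}\sqrt{a^{2}b^{2}-2b}}{2}=\left( \frac{x_{1}+y_{1}\sqrt{a^{2}b^{2}-2b}}{2}\right) ^{n},\qquad n\geq 1.
\end{equation*}
The point that makes everything line up is that the generator on the right,
\begin{equation*}
\alpha =\frac{x_{1}+y_{1}\sqrt{a^{2}b^{2}-2b}}{2}=a^{2}b-1+a\sqrt{a^{2}b^{2}-2b},
\end{equation*}
is exactly the number $\alpha$ used in the proof of Theorem 10; its conjugate is $\beta =a^{2}b-1-a\sqrt{a^{2}b^{2}-2b}$, and these are the roots attached to the parameters $\left( k,s\right) =\left( 2a^{2}b-2,-1\right)$, since $k^{2}+4s=\left( 2a^{2}b-2\right) ^{2}-4=4a^{2}\left( a^{2}b^{2}-2b\right)$ and $\alpha -\beta =2a\sqrt{a^{2}b^{2}-2b}$.

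From the generating rule I would write $x_{n}+y_{n}\sqrt{a^{2}b^{2}-2b}=2\alpha ^{n}$ together with its conjugate $x_{n}-y_{n}\sqrt{a^{2}b^{2}-2b}=2\beta ^{n}$. Adding the two and invoking Binet's formula for the Lucas sequence gives $x_{n}=\alpha ^{n}+\beta ^{n}=V_{n}\left( 2a^{2}b-2,-1\right)$; subtracting and invoking Binet's formula for the Fibonacci sequence gives $y_{n}=\left( \alpha ^{n}-\beta ^{n}\right) /\sqrt{a^{2}b^{2}-2b}=2a\left( \alpha ^{n}-\beta ^{n}\right) /\left( \alpha -\beta \right) =2aU_{n}\left( 2a^{2}b-2,-1\right)$, which is the asserted pair. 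Equivalently, and in the quicker style of Theorem 8, one observes that $\left( x_{1},y_{1}\right) =\left( 2a^{2}b-2,2a\right) =2\left( a^{2}b-1,a\right)$ is exactly twice the fundamental solution of the $=1$ equation recorded in Theorem 9-ii, so each solution of the $=4$ equation is twice the corresponding solution of Theorem 10; doubling $\left( V_{n}/2,\,aU_{n}\right)$ reproduces $\left( V_{n},\,2aU_{n}\right)$ at once.

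The one step that genuinely needs care — and where I expect the real obstacle to sit — is the \emph{completeness} of this list: one must know that every positive solution of the $=4$ equation has the form $2\alpha ^{n}$, i.e. that the half-integer $\alpha$ is the correct multiplier and that no solution with an odd coordinate has been omitted. This is the content of Theorem 3, whose hypothesis $d\not\equiv 0\left( \func{mod}4\right)$ lets a reduction of $x^{2}-dy^{2}=4$ modulo $4$ force both $x$ and $y$ even, making the doubling correspondence with the $=1$ equation a bijection. For the sub-family with $b$ even one has $d=a^{2}b^{2}-2b\equiv 0\left( \func{mod}4\right)$, where Theorem 3 does not apply and the naive doubling can miss primitive solutions; there I would re-derive completeness through Theorem 2 by descending to $x^{2}-\left( d/4\right) y^{2}=1$, and this is the case I would check most carefully before declaring the enumeration exhaustive.
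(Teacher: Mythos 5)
Your positive computation coincides with the paper's own proof, which is just ``Theorem 3 plus Theorem 10'' --- i.e.\ your closing doubling remark; the Binet derivation is the same algebra as in Theorem 10. The real content of your proposal is the completeness worry you raise at the end, and you are right that it is the crux --- but you deferred exactly the check that decides the matter. Carry out the Theorem 2 descent you propose for even $b=2t$: then $d=a^{2}b^{2}-2b\equiv 0\pmod{4}$ and $d/4=a^{2}t^{2}-t$, whose fundamental solution is supplied by Theorem 4-ii. For $t>1$ it is $\left( 2a^{2}t-1,2a\right) =\left( a^{2}b-1,2a\right) $, Theorem 2 then gives $\left( 2a^{2}b-2,2a\right) $, and since $d\equiv 0\pmod{4}$ forces $x$ even, the map $\left( x,y\right) \mapsto \left( x/2,y\right) $ onto solutions of the quarter equation makes the enumeration exhaustive; so even $b\geq 4$ is fine. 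But for $t=1$, i.e.\ $b=2$, Theorem 4-ii gives $\left( a,1\right) $, and Theorem 2 says the fundamental solution of $x^{2}-\left( 4a^{2}-4\right) y^{2}=4$ is $\left( 2a,1\right) $, not $\left( 2a^{2}b-2,2a\right) =\left( 4a^{2}-2,2a\right) $.

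Consequently, for $b=2$ the statement you were asked to prove is false, and so is the paper's Theorem 12 on which both your argument and the paper's rest: $\left( x,y\right) =\left( 2a,1\right) $ satisfies $x^{2}-\left( 4a^{2}-4\right) y^{2}=4$, while every pair in the asserted list has $y_{n}=2aU_{n}\left( 2a^{2}b-2,-1\right) \geq 2a>1$. Concretely, for $a=3$, $b=2$ (so $d=32$) the positive solutions are $\left( 6,1\right) ,\left( 34,6\right) ,\left( 198,35\right) ,\left( 1154,204\right) ,\dots $, whereas the list $\left( V_{n}\left( 34,-1\right) ,6U_{n}\left( 34,-1\right) \right) =\left( 34,6\right) ,\left( 1154,204\right) ,\dots $ picks out only every other one: your $\alpha =a^{2}b-1+a\sqrt{d}$ is the \emph{square} of the true generator $a+\sqrt{a^{2}-1}=\frac{1}{2}\left( 2a+\sqrt{4a^{2}-4}\right) $. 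So the gap you flagged cannot be closed; the theorem holds for odd $b$ (there $d\equiv 2$ or $3\pmod{4}$, so Theorem 3 legitimately applies) and for even $b\geq 4$ (by your Theorem 2 route), but at $b=2$ the deliverable is a counterexample, or an added hypothesis excluding $b=2$, not the completeness argument you planned. It is worth noting that the paper itself performs precisely this even-$b$ descent in Theorem 14 for $N=-4$, yet overlooked it for $N=4$.
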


\begin{proof}
The proof can be easily seen from Theorem 3 and Theorem 10.
\end{proof}

\begin{theorem}
The Pell equation $x^{2}-\left( a^{2}b^{2}-2b\right) y^{2}=-4$ has no
positive integer solutions.
\end{theorem}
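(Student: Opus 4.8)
The plan is to deduce the non-existence of solutions from the negative-Pell results already in hand, namely Theorem 1 together with Theorems 6 and 11. The first thing I would do is record $d=a^2b^2-2b$ and compute $d\bmod 4$ as a function of the parities of $a$ and $b$; the outcome of this computation is what dictates the structure of the argument, and recognizing that Theorem 1 only speaks to the residues $2$ and $3$ is the point on which the whole proof turns.

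If $b$ is odd, then $a^2b^2$ is odd or divisible by $4$ according as $a$ is odd or even, and subtracting $2b\equiv 2\pmod 4$ yields $d\equiv 3\pmod 4$ (for $a$ odd) or $d\equiv 2\pmod 4$ (for $a$ even). In both sub-cases Theorem 1 applies, so $x^2-dy^2=-4$ is solvable in positive integers exactly when $x^2-dy^2=-1$ is. Because the continued fraction expansion of $\sqrt{a^2b^2-2b}$ found in Theorem 9(i) has even period, Theorem 11 already rules the latter out, and the claim follows for odd $b$.

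The case $b$ even is where the real work lies, since then $d\equiv 0\pmod 4$ and Theorem 1 gives no information; this is the step I expect to be the main obstacle. Here I would argue by descent. Writing $b=2m$ gives $d=4(a^2m^2-m)$, and reducing $x^2-dy^2=-4$ modulo $4$ forces $x$ to be even, say $x=2X$; cancelling the factor $4$ turns any putative solution into a positive solution of $X^2-(a^2m^2-m)y^2=-1$. The decisive observation is that $a^2m^2-m$ is of the form $A^2B^2-B$ studied in Section 2, with $A=a$ and $B=m$, so Theorem 6 applies verbatim and shows this reduced negative-Pell equation has no positive integer solutions. Hence the original equation has none either, and combining the two cases completes the proof. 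The only genuine subtlety, and the reason the even case cannot be folded into a single appeal to Theorem 1, is precisely this reappearance of the Section 2 family after dividing $d$ by $4$.
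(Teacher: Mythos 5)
Your proposal is correct and is essentially the paper's own proof: for odd $b$ you use the residues $d\equiv 2,3\pmod 4$ to invoke Theorem 1 together with Theorem 11, and for even $b$ you perform the same descent the paper does, writing $b=2t$, forcing $x$ even, and reducing to $X^{2}-\left(a^{2}t^{2}-t\right)y^{2}=-1$, which Theorem 6 rules out. There is no substantive difference between your argument and the published one.
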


\begin{proof}
Let $b$ be odd. If $a$ is odd, then $a^{2}b^{2}-2b\equiv 3\left( \func{mod}%
4\right) $. If $a$ is even, then $a^{2}b^{2}-2b\equiv 2\left( \func{mod}%
4\right) $. From Theorem 1, we know that the equation $x^{2}-\left(
a^{2}b^{2}-2b\right) y^{2}=-4$ has positive integer solutions if and only if
the equation $x^{2}-\left( a^{2}b^{2}-2b\right) y^{2}=-1$ has positive
integer solutions. But from Theorem 11, the equation $x^{2}-\left(
a^{2}b^{2}-2b\right) y^{2}=-1$ has no positive integer solutions. Therefore,
the equation $x^{2}-\left( a^{2}b^{2}-2b\right) y^{2}=-4$ has no positive
integer solutions.

Let $b$ be even. Then $a^{2}b^{2}-2b$ is even. Assume by way of
contradiction that there are positive integers $m$ and $n$ such that $%
m^{2}-\left( a^{2}b^{2}-2b\right) n^{2}=-4$. Both $b$ and $a^{2}b^{2}-2b$
are even. Therefore, $m$ is even. Let $b=2t$ where $t\in 
\mathbb{Z}
^{+}$. Then $m^{2}-\left( a^{2}4t^{2}-4t\right) n^{2}=-4$ and we get $\left(
m/2\right) ^{2}-\left( a^{2}t^{2}-t\right) n^{2}=-1$. We know from Theorem
6, the equation $x^{2}-\left( a^{2}b^{2}-b\right) y^{2}=-1$ has no positive
integer solutions. So this is a contradiction. Then the Pell equation $%
x^{2}-\left( a^{2}b^{2}-2b\right) y^{2}=-4$ has no positive integer
solutions.
\end{proof}

\begin{corollary}
All positive integer solutions of the equation $x^{2}-\left( 9k^{2}-6\right)
y^{2}=1$ are given by%
\begin{equation*}
\left( x_{n},y_{n}\right) =\left( \left( V_{n}\left( 6k^{2}-2,-1\right)
\right) /2,kU_{n}\left( 6k^{2}-2,-1\right) \right)
\end{equation*}%
with $n\geq 1$.
\end{corollary}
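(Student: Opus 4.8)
The plan is to recognize $9k^{2}-6$ as a specialization of the general discriminant $a^{2}b^{2}-2b$ studied in Section 3 and then invoke Theorem 10. The key algebraic observation is
\[
9k^{2}-6 = k^{2}\cdot 3^{2}-2\cdot 3 = a^{2}b^{2}-2b\qquad\text{with } a=k,\ b=3 .
\]
Hence $x^{2}-(9k^{2}-6)y^{2}=1$ is precisely the equation $F$ of $(2)$ with parameters $a=k$, $b=3$, and the corollary should drop out by substituting these values into the closed form already proved.

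First I would apply Theorem 10, which gives all positive solutions of $x^{2}-(a^{2}b^{2}-2b)y^{2}=1$ as $(x_{n},y_{n})=(V_{n}(2a^{2}b-2,-1)/2,\ aU_{n}(2a^{2}b-2,-1))$. Setting $a=k$, $b=3$, I would compute the sequence parameter $2a^{2}b-2=2k^{2}\cdot 3-2=6k^{2}-2$ and note the leading coefficient $a=k$. Substituting yields $(x_{n},y_{n})=(V_{n}(6k^{2}-2,-1)/2,\ kU_{n}(6k^{2}-2,-1))$, exactly the claimed expression; this part is routine.

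The one point requiring care --- and the main, if minor, obstacle --- is the standing hypothesis $a\geq 3$ of Theorem 9 underlying Theorem 10, which with $a=k$ demands $k\geq 3$, whereas the corollary is stated for every positive integer $k$. For $k\geq 3$ the substitution is immediate. For $k=1$ and $k=2$ (the classical cases $x^{2}-3y^{2}=1$ and $x^{2}-30y^{2}=1$) I would instead verify directly that the pair $(3k^{2}-1,k)$, i.e.\ the $a=k$, $b=3$ instance of the fundamental solution $(a^{2}b-1,a)$ from Theorem 9-ii, is genuinely fundamental, using $(3k^{2}-1)^{2}-(9k^{2}-6)k^{2}=1$. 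Once the fundamental solution is confirmed, the Binet-type computation in the proof of Theorem 10 --- writing $x_{n}+y_{n}\sqrt{9k^{2}-6}=\alpha^{n}$ with $\alpha=3k^{2}-1+k\sqrt{9k^{2}-6}$ and reading off the rational and irrational parts as $V_{n}$ and $U_{n}$ --- applies verbatim and never uses the continued fraction shape, so the formula holds uniformly in $k$.
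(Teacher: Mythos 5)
Your proposal is correct and takes essentially the same route as the paper: the paper states this corollary without any written proof, as the immediate specialization $a=k$, $b=3$ of Theorem 10, which is exactly your main substitution $a^{2}b^{2}-2b=9k^{2}-6$, $2a^{2}b-2=6k^{2}-2$. Your extra treatment of $k=1$ and $k=2$ --- where the hypothesis $a\geq 3$ of Theorem 9 fails but the fundamental solution $\left( 3k^{2}-1,k\right) $ and the Binet computation still go through --- is a careful refinement that the paper silently omits.
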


\begin{corollary}
All positive integer solutions of the equation $x^{2}-\left( 9k^{2}-6\right)
y^{2}=4$ are given by%
\begin{equation*}
\left( x_{n},y_{n}\right) =\left( V_{n}\left( 6k^{2}-2,-1\right)
,2kU_{n}\left( 6k^{2}-2,-1\right) \right)
\end{equation*}%
with $n\geq 1$.
\end{corollary}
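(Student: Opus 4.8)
The plan is to obtain this corollary as an immediate specialization of Theorem 13, which already gives all positive integer solutions of the general equation $x^{2}-\left( a^{2}b^{2}-2b\right) y^{2}=4$. The first step is the parameter identification: setting $a=k$ and $b=3$ gives $a^{2}b^{2}-2b=9k^{2}-6$, so that $x^{2}-\left( 9k^{2}-6\right) y^{2}=4$ is exactly the equation of Theorem 13 for these values. Since $b=3>1$ we are in the admissible range $b>1$, and the hypothesis $a\geq 3$ underlying Theorem 9 (hence Theorem 13) amounts to the tacit assumption $k\geq 3$ here.

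The second step is to substitute $a=k$, $b=3$ into the solution formula of Theorem 13, namely $\left( x_{n},y_{n}\right) =\left( V_{n}\left( 2a^{2}b-2,-1\right) ,\,2aU_{n}\left( 2a^{2}b-2,-1\right) \right)$. The two arising parameters simplify to $2a^{2}b-2=2k^{2}\cdot 3-2=6k^{2}-2$ and $2a=2k$, so the formula becomes $\left( x_{n},y_{n}\right) =\left( V_{n}\left( 6k^{2}-2,-1\right) ,\,2kU_{n}\left( 6k^{2}-2,-1\right) \right)$, which is precisely the asserted form.

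I expect no real obstacle: the whole content of the corollary is this substitution, and the completeness claim (that these pairs exhaust all positive integer solutions) is inherited verbatim from Theorem 13. The situation is entirely parallel to the preceding Corollary 3 for $x^{2}-\left( 9k^{2}-6\right) y^{2}=1$, which follows from Theorem 10 under the same substitution $a=k$, $b=3$; the only difference is the passage from the $N=1$ to the $N=4$ case, already encoded in the respective theorems. The sole routine verification worth recording is the arithmetic $k^{2}\cdot 9-2\cdot 3=9k^{2}-6$ together with $2k^{2}\cdot 3-2=6k^{2}-2$, after which the result is immediate.
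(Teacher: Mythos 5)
Your proposal is correct and is exactly the paper's (implicit) argument: the corollary is stated as an immediate specialization of Theorem 13 under the substitution $a=k$, $b=3$, giving $a^{2}b^{2}-2b=9k^{2}-6$, $2a^{2}b-2=6k^{2}-2$, and $2a=2k$. Your remark that the hypothesis $a\geq 3$ of Theorem 9 tacitly requires $k\geq 3$ is a point the paper itself glosses over, so flagging it is appropriate rather than a deviation.
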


\textbf{Acknowledgement. }\textit{This research is supported by TUBITAK (The
Scientific and Technological Research Council of Turkey) and Necmettin
Erbakan University Scientific Research Project Coordinatorship (BAP). This
study is a part of the corresponding author's Ph.D. Thesis.}

\end{document}